\theoremstyle{definition}
\newtheorem{theorem}{Theorem}[section]
\newtheorem{thm}[theorem]{Theorem}
\newtheorem{lem}[theorem]{Lemma}
\newtheorem{defn}{Definition}[section]
\begin{document}
	
	%\internallinenumbers

	%
\title{Forbidden Minor Characterization of 3-fold-3-splitting of Binary Gammoid}\maketitle

\markboth{ Shital Dilip Solanki and S. B. Dhotre}{Excluded Minor Characterization of 3-fold-3-splitting of Binary Gammoid}\begin{center}\begin{large} Shital Dilip Solanki$^1$ and S. B. Dhotre$^2$\end{large}\\\begin{small}\vskip.1in\emph{
			1. Ajeenkya DY Patil University, Pune-411047, Maharashtra,
			India\\ 
			2. Department of Mathematics,
			Savitribai Phule Pune University,\\ Pune - 411007, Maharashtra,
			India}\\
		E-mail: \texttt{1. shital.solanki@adypu.edu.in, 2. dsantosh2@yahoo.co.in. }\end{small}\end{center}\vskip.2in
	\begin{abstract} 
	The {\it r-fold-n-point-splitting} operation is an important operation in Graph Theory defined by Slater \cite{slater r fold}. Later, Ghafari \cite{Ghafari_3fold} extended {\it 3-fold-n-point-splitting} operation in binary matroids and obtained the result for Eulerian matroids whose 3-fold is Eulerian. In this paper, we give another approach to extend {\it 3-fold-3-point-splitting} in binary matroids in terms of splitting and haracterize binary gammoid whose {\it 3-fold-3-point spitting} is binary gammoid.  	 
\end{abstract}\vskip.2in

\noindent\begin{Small}\textbf{Mathematics Subject Classification (2010)}:
	05C83,	05B35, 05C50     \\\textbf{Keywords}:  r-fold-n-point-splitting, Binary Matroid, Splitting, binary gammoid. \end{Small}\vskip.2in
\vskip.25in

	\baselineskip 14truept %Do NOT Change this 19 truept.
\section{Introduction}\vskip0.5cm \noindent
Refer to Oxley \cite{ox}, for undefined concepts and terminologies. \\
A very well-known splitting operation in the graph is defined by Fleishner \cite{fl} as given below. \\
Let $H$ be a connected graph with a node $v$ with, a degree of $v$ at least $3$. Let $a=v_1v$ and $b=v_2v$ be two arcs incident at $v$. The graph $H_{a,b}$ is obtained by adding new node $v'$ adjacent to $v_1, v_2$ and removing arcs $a,b$ and adding arcs $a'=v_1v'$ and $b'=v_2v'$. The splitting operation on $H$ is the transformation from $H$ to $H_{x,y}$.

Later, this operation was then extended to binary matroid by Raghunathan et al. \cite{ttr} as a splitting using two elements. In \cite{mms1}, Shikare generalized splitting using $n$-elements.
\begin{defn}\cite{mms1} \label{def_splitting_in_3fold}
	Let $M$ be a binary matroid and $A$ be the matrix representation of $M$. Let $T$ be a subset of $E(M)$, with $|T|=n$ and $A_T$ be a matrix obtained by appending a row at the bottom of the matrix $A$ with entries one in the column corresponding to the elements in $T$ and zero everywhere else. Then $M(A_T)$ is called splitting matroid which is denoted by $M_T$ and the operation is the splitting operation using $n$-elements.
\end{defn}
\noindent The {\it n-point splitting} in graph is defined by Slater \cite{slater_point splitting in graph}. The definition of the {\it n-point splitting} is given below.\\
Let $u$ be a node in a graph $H$ with $deg(u)\geq (2n-2)$ and $K$ be a graph obtained by replacing $u$ with two adjacent nodes $u_1$ and $u_2$, then if a node $v$ is adjacent to $u$ in $H$ make $v$ adjacent to either $u_1$ or $u_2$ but not both in $K$ such that degree of $u_1$ and $u_2$ is at least $n$. The n-point splitting is the transformation from $H$ to $K$. \\
The extension of this operation is defined by Azadi \cite{azt} and is called an element-splitting operation.
\begin{defn}\label{def_element_sp_in_3fold}
	Let $M$ be a binary matroid with matrix representation $A$. Let $T$ be a subset of $E(M)$, with $|T|=n$. Obtain a matrix $A_T'$ by appending a row at the bottom with entries one in the column corresponding to the elements in $T$ and zero elsewhere and appending a column at the right corresponding to an element $a$ with entries zero everywhere except in the last row where it takes value one. The matroid $M(A_T')$ is called an element splitting matroid denoted by $M_T'$ and the element splitting operation is the transformation from $M$ to $M_T'$.
\end{defn}
\noindent The {\it r-fold-n-point-splitting} operation in Graph Theory is defined by Slater\cite{slater r fold}. The definition is given below. \\ 
Let $H$ be a graph, suppose $u$ be a node adjacent to nodes $v_1, v_2, \cdots, v_t$, where $t \geq n$. Obtain a graph $K$ from $H$ by replacing node $u$ by a complete graph $(K_r)$, on nodes $u_1, u_2, \cdots, u_r$, where $2 \leq r \leq n$. Make each node $v_i$ adjacent to exactly one $u_i$ such that $deg(u_i) \geq n$, for $i=1,2, \cdots r$. Then, the {\it r-fold-n-point-splitting} operation is the transformation from $H$ to $K$. \\
Concerning the above definition, Figure \ref{fig 4-fold} illustrates the {\it 4-fold-4-point-splitting} operation. 
\begin{figure}[h!]
	\centering
% This is a LaTeX picture output by TeXCAD.
% File name: [4-fold in graphs.pic].
% Version of TeXCAD: 4.51
% Reference / build: 27-Nov-2018 (rev. a75)
% For new versions, check: http://texcad.sf.net/
% Options on the following lines.
%\grade{\on}
%\emlines{\off}
%\epic{\off}
%\beziermacro{\on}
%\reduce{\on}
%\snapping{\off}
%\pvinsert{% Your \input, \def, etc. here}
%\quality{8.000}
%\graddiff{0.005}
%\snapasp{1}
%\zoom{9.5136}
\unitlength 1mm % = 2.845pt
\linethickness{0.4pt}
\ifx\plotpoint\undefined\newsavebox{\plotpoint}\fi % GNUPLOT compatibility
\begin{picture}(128.448,58.127)(0,0)
	\put(8.409,10.091){\circle*{2.102}}
	\put(8.409,23.545){\circle*{2.102}}
	\put(8.409,34.582){\circle*{2.102}}
	\put(47.3,9.986){\circle*{1.734}}
	\put(47.3,23.44){\circle*{1.734}}
	\put(47.3,34.477){\circle*{1.734}}
	\put(25.253,54.895){\circle*{1.734}}
	\put(35.975,44.804){\circle*{1.734}}
	\put(27.881,25.463){\circle*{1.734}}
	%\emline(8.409,34.687)(25.122,54.974)
	\multiput(8.409,34.687)(.0336953957,.0409007004){496}{\line(0,1){.0409007004}}
	%\end
	%\emline(25.122,54.974)(47.301,34.477)
	\multiput(25.122,54.974)(.0364782494,-.0337121262){608}{\line(1,0){.0364782494}}
	%\end
	\put(47.301,34.477){\line(0,-1){24.491}}
	\put(47.301,9.986){\line(-1,0){38.156}}
	%\emline(9.145,9.986)(8.199,10.196)
	\multiput(9.145,9.986)(-.135145,.030032){7}{\line(-1,0){.135145}}
	%\end
	\put(8.304,34.687){\line(0,-1){24.491}}
	%\emline(8.304,10.196)(27.75,25.332)
	\multiput(8.304,10.196)(.0433092337,.0337109711){449}{\line(1,0){.0433092337}}
	%\end
	%\emline(27.75,25.332)(8.199,23.755)
	\multiput(27.75,25.332)(-.41597784,-.0335466){47}{\line(-1,0){.41597784}}
	%\end
	%\emline(8.409,34.687)(27.855,25.332)
	\multiput(8.409,34.687)(.0699490861,-.033651182){278}{\line(1,0){.0699490861}}
	%\end
	%\emline(24.701,54.974)(27.75,25.753)
	\multiput(24.701,54.974)(.033497448,-.321113464){91}{\line(0,-1){.321113464}}
	%\end
	%\emline(46.985,34.477)(27.54,25.542)
	\multiput(46.985,34.477)(-.0733805507,-.0337153882){265}{\line(-1,0){.0733805507}}
	%\end
	%\emline(27.645,25.647)(47.511,23.545)
	\multiput(27.645,25.647)(.31533804,-.03336911){63}{\line(1,0){.31533804}}
	%\end
	%\emline(27.434,25.437)(47.406,10.091)
	\multiput(27.434,25.437)(.0438932074,-.0337284646){455}{\line(1,0){.0438932074}}
	%\end
	\put(8.619,35.108){\line(1,0){38.997}}
	%\emline(35.949,44.673)(27.54,25.647)
	\multiput(35.949,44.673)(-.0336360579,-.0761015809){250}{\line(0,-1){.0761015809}}
	%\end
	\put(6.937,7.358){\makebox(0,0)[cc]{$v_1$}}
	\put(4.835,23.44){\makebox(0,0)[cc]{$v_2$}}
	\put(4.52,34.372){\makebox(0,0)[cc]{$v_3$}}
	\put(23.65,58.127){\makebox(0,0)[cc]{$v_4$}}
	\put(37.63,47.196){\makebox(0,0)[cc]{$v_5$}}
	\put(50.454,35.633){\makebox(0,0)[cc]{$v_6$}}
	\put(50.98,23.861){\makebox(0,0)[cc]{$v_7$}}
	\put(49.613,9.46){\makebox(0,0)[cc]{$v_8$}}
	\put(27.54,23.02){\makebox(0,0)[cc]{$u$}}
	\put(85.877,10.091){\circle*{2.102}}
	\put(85.877,23.546){\circle*{2.102}}
	\put(85.877,34.582){\circle*{2.102}}
	\put(124.769,9.986){\circle*{1.734}}
	\put(124.769,23.44){\circle*{1.734}}
	\put(124.769,34.477){\circle*{1.734}}
	\put(102.721,54.895){\circle*{1.734}}
	\put(113.443,44.804){\circle*{1.734}}
	%\emline(85.877,34.687)(102.59,54.974)
	\multiput(85.877,34.687)(.0336955645,.0409012097){496}{\line(0,1){.0409012097}}
	%\end
	%\emline(102.59,54.974)(124.769,34.477)
	\multiput(102.59,54.974)(.0364786184,-.0337121711){608}{\line(1,0){.0364786184}}
	%\end
	\put(124.769,34.477){\line(0,-1){24.491}}
	\put(124.769,9.986){\line(-1,0){38.156}}
	%\emline(86.613,9.986)(85.667,10.196)
	\multiput(86.613,9.986)(-.135143,.03){7}{\line(-1,0){.135143}}
	%\end
	\put(85.772,34.687){\line(0,-1){24.491}}
	\put(86.087,35.108){\line(1,0){38.997}}
	\put(84.406,7.358){\makebox(0,0)[cc]{$v_1$}}
	\put(82.303,23.44){\makebox(0,0)[cc]{$v_2$}}
	\put(81.988,34.372){\makebox(0,0)[cc]{$v_3$}}
	\put(101.119,58.127){\makebox(0,0)[cc]{$v_4$}}
	\put(115.098,47.196){\makebox(0,0)[cc]{$v_5$}}
	\put(127.922,35.633){\makebox(0,0)[cc]{$v_6$}}
	\put(128.448,23.861){\makebox(0,0)[cc]{$v_7$}}
	\put(127.081,9.46){\makebox(0,0)[cc]{$v_8$}}
	\put(105.349,31.139){\circle*{1.734}}
	\put(97.571,24.728){\circle*{1.734}}
	\put(113.338,25.148){\circle*{1.734}}
	\put(105.98,19.787){\circle*{1.734}}
	\put(99.962,26.173){\line(0,1){.105}}
	%\emline(97.439,24.701)(105.428,31.218)
	\multiput(97.439,24.701)(.041178164,.033592712){194}{\line(1,0){.041178164}}
	%\end
	%\emline(105.428,31.218)(113.522,25.122)
	\multiput(105.428,31.218)(.044716444,-.033682516){181}{\line(1,0){.044716444}}
	%\end
	%\emline(113.522,25.122)(106.059,19.866)
	\multiput(113.522,25.122)(-.047839746,-.033689962){156}{\line(-1,0){.047839746}}
	%\end
	%\emline(106.059,19.866)(97.439,24.701)
	\multiput(106.059,19.866)(-.059855832,.033577662){144}{\line(-1,0){.059855832}}
	%\end
	\put(97.439,24.701){\line(1,0){16.187}}
	%\emline(105.323,31.113)(105.533,20.497)
	\multiput(105.323,31.113)(.030032,-1.516626){7}{\line(0,-1){1.516626}}
	%\end
	%\emline(102.59,55.079)(105.323,31.534)
	\multiput(102.59,55.079)(.03373987,-.29068198){81}{\line(0,-1){.29068198}}
	%\end
	\put(105.323,31.534){\line(3,5){7.883}}
	%\emline(113.311,25.753)(124.874,34.582)
	\multiput(113.311,25.753)(.0441312782,.0337002488){262}{\line(1,0){.0441312782}}
	%\end
	%\emline(113.522,25.227)(124.348,23.44)
	\multiput(113.522,25.227)(.20427559,-.03371539){53}{\line(1,0){.20427559}}
	%\end
	%\emline(105.848,19.656)(124.664,10.301)
	\multiput(105.848,19.656)(.0676804671,-.033651182){278}{\line(1,0){.0676804671}}
	%\end
	%\emline(85.351,10.511)(105.848,19.866)
	\multiput(85.351,10.511)(.0737301178,.033651182){278}{\line(1,0){.0737301178}}
	%\end
	%\emline(85.877,23.545)(97.334,24.701)
	\multiput(85.877,23.545)(.32735092,.03303541){35}{\line(1,0){.32735092}}
	%\end
	%\emline(85.877,34.477)(97.545,24.807)
	\multiput(85.877,34.477)(.0406533365,-.0336946573){287}{\line(1,0){.0406533365}}
	%\end
	\put(105.533,17.028){\makebox(0,0)[cc]{$u_1$}}
	\put(95.968,22.915){\makebox(0,0)[cc]{$u_2$}}
	\put(102.17,31.954){\makebox(0,0)[cc]{$u_3$}}
	\put(114.363,23.23){\makebox(0,0)[cc]{$u_4$}}
	\put(26.068,5){\makebox(0,0)[cc]{$H$}}
	\put(105.218,5){\makebox(0,0)[cc]{$K$}}
\end{picture}
\caption{4-fold-4-point-splitting operation}
\label{fig 4-fold}
\end{figure}
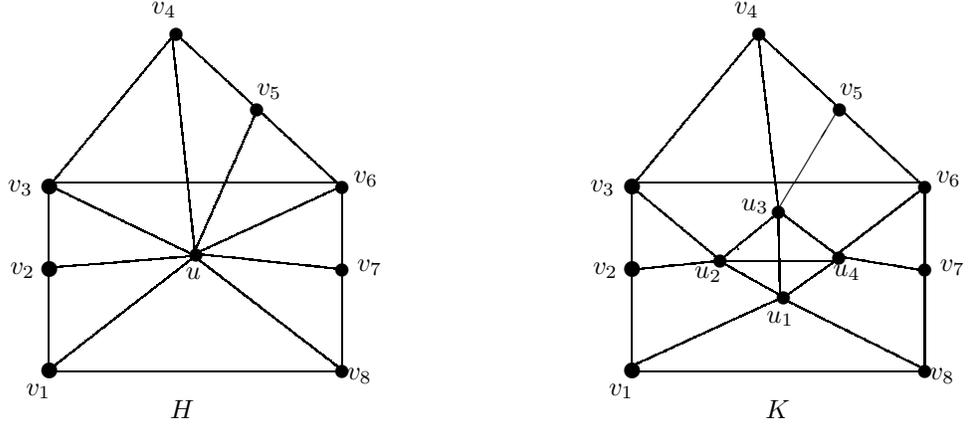

\noindent Figure \ref{graph 3fold} illustrates the construction of {\it 3-fold-3-point-splitting} graph from a given graph $H$. 
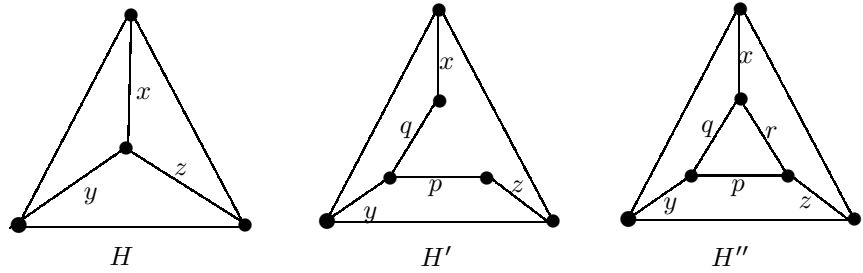
\begin{figure}[h!]
\centering
% This is a LaTeX picture output by TeXCAD.
% File name: [3-fold-3poin in graphs.pic].
% Version of TeXCAD: 4.51
% Reference / build: 27-Nov-2018 (rev. a75)
% For new versions, check: http://texcad.sf.net/
% Options on the following lines.
%\grade{\on}
%\emlines{\off}
%\epic{\off}
%\beziermacro{\on}
%\reduce{\on}
%\snapping{\off}
%\pvinsert{% Your \input, \def, etc. here}
%\quality{8.000}
%\graddiff{0.005}
%\snapasp{1}
%\zoom{9.5136}
\unitlength 1mm % = 2.845pt
\linethickness{0.4pt}
\ifx\plotpoint\undefined\newsavebox{\plotpoint}\fi % GNUPLOT compatibility
\begin{picture}(120.381,39.832)(0,0)
	\put(8.409,10.091){\circle*{2.102}}
	\put(38.471,10.091){\circle*{1.734}}
	\put(23.335,38.051){\circle*{1.88}}
	\put(22.704,20.392){\circle*{1.784}}
	%\emline(22.704,37.63)(8.199,9.881)
	\multiput(22.704,37.63)(-.0337338371,-.064534297){430}{\line(0,-1){.064534297}}
	%\end
	\put(8.199,9.881){\line(1,0){30.272}}
	%\emline(38.471,9.881)(23.335,38.471)
	\multiput(38.471,9.881)(-.0337109711,.0636762788){449}{\line(0,1){.0636762788}}
	%\end
	%\emline(23.335,38.471)(22.915,20.602)
	\multiput(23.335,38.471)(-.0323424,-1.3745504){13}{\line(0,-1){1.3745504}}
	%\end
	%\emline(22.915,20.602)(7.148,9.67)
	\multiput(22.915,20.602)(-.0486632781,-.0337398728){324}{\line(-1,0){.0486632781}}
	%\end
	%\emline(22.494,20.392)(39.102,9.881)
	\multiput(22.494,20.392)(.0532301396,-.0336899618){312}{\line(1,0){.0532301396}}
	%\end
	\put(49.403,10.721){\circle*{2.102}}
	\put(79.466,10.721){\circle*{1.734}}
	\put(64.329,38.681){\circle*{1.88}}
	\put(70.636,16.397){\circle*{1.784}}
	\put(57.812,16.397){\circle*{1.784}}
	%\emline(63.699,38.261)(49.193,10.511)
	\multiput(63.699,38.261)(-.0337348837,-.0645348837){430}{\line(0,-1){.0645348837}}
	%\end
	\put(49.193,10.511){\line(1,0){30.272}}
	%\emline(79.466,10.511)(64.329,39.102)
	\multiput(79.466,10.511)(-.0337126949,.0636770601){449}{\line(0,1){.0636770601}}
	%\end
	%\emline(63.698,26.278)(57.392,15.977)
	\multiput(63.698,26.278)(-.033725994,-.05508579){187}{\line(0,-1){.05508579}}
	%\end
	%\emline(71.056,16.398)(79.045,10.301)
	\multiput(71.056,16.398)(.044135711,-.033682516){181}{\line(1,0){.044135711}}
	%\end
	%\emline(57.602,16.398)(48.772,10.301)
	\multiput(57.602,16.398)(-.048781576,-.033682516){181}{\line(-1,0){.048781576}}
	%\end
	\put(64.119,39.102){\line(0,-1){12.614}}
	\put(64.38,26.645){\circle*{1.784}}
	\put(57.812,16.503){\line(1,0){13.139}}
	\put(89.451,10.932){\circle*{2.102}}
	\put(119.514,10.932){\circle*{1.734}}
	\put(104.377,38.892){\circle*{1.88}}
	\put(110.684,16.608){\circle*{1.784}}
	\put(97.861,16.608){\circle*{1.784}}
	%\emline(103.747,38.472)(89.241,10.722)
	\multiput(103.747,38.472)(-.0337348837,-.0645348837){430}{\line(0,-1){.0645348837}}
	%\end
	\put(89.241,10.722){\line(1,0){30.272}}
	%\emline(119.514,10.722)(104.377,39.313)
	\multiput(119.514,10.722)(-.0337126949,.0636770601){449}{\line(0,1){.0636770601}}
	%\end
	%\emline(103.747,26.489)(97.44,16.188)
	\multiput(103.747,26.489)(-.033727273,-.055085561){187}{\line(0,-1){.055085561}}
	%\end
	%\emline(111.105,16.608)(119.093,10.512)
	\multiput(111.105,16.608)(.044132597,-.033679558){181}{\line(1,0){.044132597}}
	%\end
	%\emline(97.65,16.608)(88.821,10.512)
	\multiput(97.65,16.608)(-.048779006,-.033679558){181}{\line(-1,0){.048779006}}
	%\end
	\put(104.167,39.313){\line(0,-1){12.614}}
	\put(104.428,26.855){\circle*{1.784}}
	\put(97.86,16.713){\line(1,0){13.139}}
	%\emline(104.167,27.119)(110.579,16.818)
	\multiput(104.167,27.119)(.033570018,-.053932161){191}{\line(0,-1){.053932161}}
	%\end
	\put(25,27.645){\makebox(0,0)[cc]{$x$}}
	\put(17.974,14){\makebox(0,0)[cc]{$y$}}
	\put(29.957,17.5){\makebox(0,0)[cc]{$z$}}
	\put(65.275,31.639){\makebox(0,0)[cc]{$x$}}
	\put(55.184,11.5){\makebox(0,0)[cc]{$y$}}
	\put(74.735,15.241){\makebox(0,0)[cc]{$z$}}
	\put(63.803,15){\makebox(0,0)[cc]{$p$}}
	\put(59.914,22.704){\makebox(0,0)[cc]{$q$}}
	\put(108.476,22.494){\makebox(0,0)[cc]{$r$}}
	\put(104.062,15){\makebox(0,0)[cc]{$p$}}
	\put(99.962,22.915){\makebox(0,0)[cc]{$q$}}
	\put(105.113,32.48){\makebox(0,0)[cc]{$x$}}
	\put(95.022,12.6){\makebox(0,0)[cc]{$y$}}
	\put(112.996,13.139){\makebox(0,0)[cc]{$z$}}
	\put(22.074,6){\makebox(0,0)[cc]{$H$}}
	\put(63.909,6){\makebox(0,0)[cc]{$H'$}}
	\put(103.01,6){\makebox(0,0)[cc]{$H''$}}
\end{picture}
\caption{3-fold-3-point splitting.}
\label{graph 3fold}
\end{figure}
Let the graphs $H$, $H'$ and $H''$ be as shown in the Figure \ref{graph 3fold}. Then, the graph $H''$ is {\it 3-fold-3-point splitting} of $H$. By applying element splitting with respect to $\{x,y\}$ and then with respect to $x$ on $M(H)$ we get $M(H')$. And, by adding an element $r$ to $M(H')$ such that it forms a circuit with $\{p,q\}$, we get $M(H'')$, where the elements $p,q$ are added during two element splitting operations. The {\it 3-fold-n-point-splitting} operation is extended by Ghafari \cite{Ghafari_3fold} to binary matroids. He defined {\it 3-fold-n-point-splitting} operation in terms of an element splitting operation. The definition is given below.
\begin{defn}
	Let $M$ be a binary matroid with matrix representation $A$ and $T$ be a proper subset of the cocircuit of $M$. Suppose $\phi \neq T' \subset T$. Let $B=(A_T')_{T'}$, then by adding an element $r$ to $M(B)$, we get {\it 3-fold-n-point-splitting} of matroid $M$ and it is denoted by $M''$. An element $r$ is added such that it forms a circuit with $\{p,q\}$, where the elements $p,q$ are added during two element splitting operations. 
\end{defn}
\noindent Thus $E(M'')=E(M) \cup \{p,q,r\}$. \\
In this paper, we define {\it 3-fold-3-point-splitting} operation in terms of splitting. The splitting operation does not preserve the properties of matroid like graphicness, connectedness, cographicness, etc. In \cite{ymb_Gammoid, ymb1, gm, mms}, various researchers have studied splitting operations for these properties. Solanki et al. \cite{sds_Gammoid} characterized binary gammoid whose splitting is binary gammoid as given in the following theorem.
\begin{thm} \label{thm gam to gam wrt 3 elt in3fold}
	Let $M$ be a binary gammoid then for $T \subseteq E(M)$ with $|T|=3$, $M_T$ is a binary gammoid if and only if $M$ do not contain $M(G_i)$ as a minor, where the graph $G_i$ is shown in Figure \ref{fig gam to gam wrt 3 element in3fold}, for $i=1,2,3$. 
\end{thm}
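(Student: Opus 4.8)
The plan is to translate the statement into a finite minor check via the excluded-minor description of binary gammoids. Recall that a binary matroid is a gammoid if and only if it has no minor isomorphic to $M(K_4)$ (so that, up to loops and series/parallel classes, the binary gammoids are exactly the series--parallel networks). Since $M_T$ has the same ground set as $M$ and $M$ is assumed $M(K_4)$-free, this is precisely what keeps the problem finite. Under this reduction the theorem becomes: $M_T$ has an $M(K_4)$-minor if and only if $M$ has a minor isomorphic to one of $M(G_1),M(G_2),M(G_3)$, where the graphs $G_i$ are to be understood as the minor-minimal obstructions, each carrying a distinguished $3$-subset $T_i\subseteq E(G_i)$ for which $M(G_i)_{T_i}$ fails to be a gammoid.

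First I would record how the splitting operation interacts with minors. Deletion behaves well: $M_T\setminus e=(M\setminus e)_{T\setminus\{e\}}$ for every $e$, since the appended row is supported on $T$ (cf.\ \cite{ttr,mms1}). Contraction of $e\notin T$ is also clean, $M_T/e=(M/e)_T$, as a pivot inside $A$ leaves the appended row fixed. The delicate case is contracting $e\in T$: a pivot then adds a row of $A$ to the appended row, so $M_T/e$ need not itself be a splitting of $M/e$; using instead the circuit description of $M_T$ (a circuit of $M_T$ is either an $M$-circuit meeting $T$ in an even set, or a minimal disjoint union of two $M$-circuits each meeting $T$ in an odd set), one can still identify $M_T/e$ as a splitting or element-splitting of $M/e$ with respect to a modified subset. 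Setting up these reductions cleanly, and checking that they cover all the cases, is the main technical burden.

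With the reductions in place, the ``only if'' direction is the contrapositive: if $M$ has a minor $M\setminus D/C\cong M(G_i)$, push the appended row through the same deletions and contractions to produce an explicit minor of $M_T$ of the form $M(G_i)_{T_i}$, and then verify by direct computation (these matroids are tiny) that each $M(G_i)_{T_i}$ contains $M(K_4)$, whence $M_T$ is not a gammoid. For the ``if'' direction, suppose $M_T$ is not a gammoid, so $M_T$ has a minor $N=M_T\setminus D/C\cong M(K_4)$. Since $r(M(K_4))=3$ and the rank of $M_T$ is $r(M)$ or $r(M)+1$, the sizes of $C$ and $D$ are essentially pinned down, which tightly constrains the configuration; running the reduction lemmas in reverse and going through the finitely many ways in which $C$ and $D$ can meet $T$ and the appended row --- which either survives in $N$ as a parallel or series element, or is consumed in a pivot --- one finds in every case that the corresponding minor of $M$ is isomorphic to $M(G_1)$, $M(G_2)$ or $M(G_3)$. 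This last case analysis, together with the bookkeeping around the ``contract an element of $T$'' situation, is the heart of the argument: the hard part is ensuring the analysis is exhaustive, i.e.\ that no obstruction is missed and the list $G_1,G_2,G_3$ is complete.
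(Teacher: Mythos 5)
A preliminary but important observation: the paper contains no proof of Theorem \ref{thm gam to gam wrt 3 elt in3fold} --- it is imported from \cite{sds_Gammoid} and used as a black box --- so there is no in-paper argument to measure yours against. On its own terms, your plan is the natural one and is consistent with the technique the paper does deploy for its analogous result (Theorem \ref{thm GF3 in 3fold}, via Lemmas \ref{main lemma in 3fold}, \ref{rel lemma in 3fold} and \ref{quotient of minor F}): reduce ``binary gammoid'' to ``binary with no $M(K_4)$-minor,'' record how splitting commutes with deletion and with contraction of elements outside $T$, and classify the ways an $M(K_4)$-minor of $M_T$ can arise. You are also right to insist that the graphs $G_i$ must be read as carrying distinguished elements identified with $T$; without that convention the ``only if'' direction is simply false (take $M$ to be the direct sum of $M(G_1)$ with three parallel elements and let $T$ be those three elements: $M_T$ is again a gammoid, yet $M$ has an $M(G_1)$-minor).

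That said, what you have written is an outline rather than a proof, and one of the two devices you invoke to make the converse direction finite does not work. You claim that since $r(M(K_4))=3$ and $r(M_T)\in\{r(M),r(M)+1\}$, ``the sizes of $C$ and $D$ are essentially pinned down.'' They are not: choosing $C$ independent forces $|C|=r(M_T)-3$, which grows with $r(M)$, so the configuration space you propose to enumerate is a priori unbounded. The standard repair --- and the one this paper uses for its own theorem --- is to first absorb all deletions and contractions disjoint from $T$ into a minor $R$ of $M$ (as in Lemma \ref{main lemma in 3fold}), reducing to the cases $R_T\cong M(K_4)$, $R_T/T'\cong M(K_4)$, etc., and then to show that $R$ is a quotient of $M(K_4)$ or a bounded coextension of one (as in Lemmas \ref{rel lemma in 3fold} and \ref{quotient of minor F}); only after that reduction is the enumeration finite. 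Finally, the enumeration itself --- checking that every surviving candidate yields exactly $M(G_1)$, $M(G_2)$ or $M(G_3)$ and that no obstruction is missed --- is deferred in your write-up, and that completeness check is the entire mathematical content of the theorem. So the scaffolding is right, but the load-bearing steps are either asserted or rest on a rank argument that does not hold.
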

\begin{figure}[h!]
	\centering
	%TeXCAD Picture [Forbidden_For_K_4_using_3elts.pic]. Options:
	%\grade{\on}
	%\emlines{\off}
	%\epic{\off}
	%\beziermacro{\on}
	%\reduce{\on}
	%\snapping{\off}
	%\pvinsert{% Your \input, \def, etc. here}
	%\quality{8.000}
	%\graddiff{0.005}
	%\snapasp{1}
	%\zoom{4.0000}
	\unitlength 1mm % = 2.845pt
	\linethickness{0.4pt}
	\ifx\plotpoint\undefined\newsavebox{\plotpoint}\fi % GNUPLOT compatibility
	\begin{picture}(82.015,31.5)(0,0)
		\put(25.56,8.438){\circle*{1.5}}
		\put(15.56,23.438){\circle*{1.5}}
		\put(6.06,8.469){\circle*{1.5}}
		\put(5.908,8.487){\line(1,0){19.534}}
		%\emline(15.345,23.588)(25.75,8.574)
		\multiput(15.345,23.588)(.0336731392,-.0485889968){309}{\line(0,-1){.0485889968}}
		%\end
		%\emline(15.493,23.291)(5.831,8.574)
		\multiput(15.493,23.291)(-.0336655052,-.0512787456){287}{\line(0,-1){.0512787456}}
		%\end
		\qbezier(15.642,23.44)(27.014,21.135)(25.602,8.426)
		\qbezier(5.98,8.574)(3.75,20.764)(15.196,23.44)
		\qbezier(15.345,23.737)(9.547,28.048)(15.642,29.386)
		\qbezier(15.642,29.386)(21.662,28.419)(15.493,23.588)
		\put(52.912,8.73){\circle*{1.5}}
		\put(42.912,23.73){\circle*{1.5}}
		\put(33.412,8.76){\circle*{1.5}}
		\put(33.26,8.779){\line(1,0){19.534}}
		%\emline(42.697,23.88)(53.102,8.866)
		\multiput(42.697,23.88)(.0336731392,-.0485889968){309}{\line(0,-1){.0485889968}}
		%\end
		%\emline(42.845,23.583)(33.183,8.866)
		\multiput(42.845,23.583)(-.0336655052,-.0512787456){287}{\line(0,-1){.0512787456}}
		%\end
		\qbezier(42.994,23.731)(54.366,21.427)(52.954,8.717)
		\qbezier(33.332,8.866)(31.102,21.055)(42.548,23.731)
		\qbezier(52.953,8.723)(59.94,9.541)(56.224,4.71)
		\qbezier(56.224,4.71)(51.021,1.811)(52.953,8.723)
		\put(80.561,8.73){\circle*{1.5}}
		\put(70.561,23.73){\circle*{1.5}}
		\put(61.061,8.76){\circle*{1.5}}
		\put(60.909,8.779){\line(1,0){19.534}}
		%\emline(70.346,23.88)(80.751,8.866)
		\multiput(70.346,23.88)(.0336731392,-.0485889968){309}{\line(0,-1){.0485889968}}
		%\end
		%\emline(70.494,23.583)(60.832,8.866)
		\multiput(70.494,23.583)(-.0336655052,-.0512787456){287}{\line(0,-1){.0512787456}}
		%\end
		\qbezier(70.643,23.731)(82.015,21.427)(80.603,8.717)
		\qbezier(60.981,8.866)(58.751,21.055)(70.197,23.731)
		\qbezier(60.981,8.723)(72.575,.25)(80.603,9.02)
		\put(15,1){\makebox(0,0)[cc]{$G_1$}}
		\put(42.5,1){\makebox(0,0)[cc]{$G_2$}}
		\put(70.75,1){\makebox(0,0)[cc]{$G_3$}}
		\put(4.5,19.5){\makebox(0,0)[cc]{$x$}}
		\put(15.25,31.5){\makebox(0,0)[cc]{$b$}}
		\put(26.25,19.25){\makebox(0,0)[cc]{$c$}}
		\put(34,21.75){\makebox(0,0)[cc]{$x$}}
		\put(53.25,20){\makebox(0,0)[cc]{$c$}}
		\put(56.5,2.25){\makebox(0,0)[cc]{$b$}}
		\put(61.25,20.75){\makebox(0,0)[cc]{$x$}}
		\put(81.5,19.75){\makebox(0,0)[cc]{$b$}}
		\put(71,7){\makebox(0,0)[cc]{$c$}}
	\end{picture}

	\caption{Forbidden minors for binary gammoid whose splitting using three elements is binary gammoid.}
	\label{fig gam to gam wrt 3 element in3fold}
\end{figure}
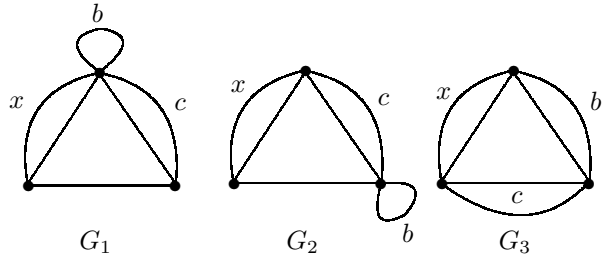
{\bf Abbreviation : } We use abbreviation {\it 3-fold} instead of {\it 3-fold-3-point-splitting}. 
\noindent In this paper, we characterize binary gammoid whose {\it 3-fold} is binary gammoid. We prove the following theorem.
\begin{thm}
Let $M$ be a binary gammoid then {\it 3-fold} of $M$ is binary gammoid if and only if $M$ does not contain $M(G_4)$ as a minor, where $G_4$ is shown in the Figure \ref{graph 3fold gammoid minor}.
\begin{figure}[h!]
\centering
% This is a LaTeX picture output by TeXCAD.
% File name: [minor of 3-fold binary gammoid.pic].
% Version of TeXCAD: 4.51
% Reference / build: 27-Nov-2018 (rev. a75)
% For new versions, check: http://texcad.sf.net/
% Options on the following lines.
%\grade{\on}
%\emlines{\off}
%\epic{\off}
%\beziermacro{\on}
%\reduce{\on}
%\snapping{\off}
%\pvinsert{% Your \input, \def, etc. here}
%\quality{8.000}
%\graddiff{0.005}
%\snapasp{1}
%\zoom{4.0000}
\unitlength 1mm % = 2.845pt
\linethickness{0.4pt}
\ifx\plotpoint\undefined\newsavebox{\plotpoint}\fi % GNUPLOT compatibility
\begin{picture}(56.75,41.625)(0,0)
	\put(18,23.75){\circle*{2.5}}
	\put(55.5,23.75){\circle*{2.5}}
	\put(17.75,23.75){\line(1,0){38.5}}
	\qbezier(17.75,24)(37.625,41.625)(55,23.75)
	\qbezier(55,23.75)(36.75,6.75)(18.5,23.75)
	\put(36,36.75){\makebox(0,0)[cc]{$x$}}
	\put(37,26.25){\makebox(0,0)[cc]{$y$}}
	\put(36.5,4.5){\makebox(0,0)[cc]{$G_4$}}
\end{picture}

\caption{Excluded minor for a binary gammoid whose {\it 3-fold} is gammoid.}
\label{graph 3fold gammoid minor}
\end{figure}
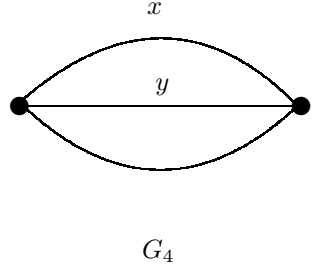
\end{thm}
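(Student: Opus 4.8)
The plan is to reduce the statement to a single computation — that the $3$-fold of $M(G_4)$ is $M(K_4)$ — together with the excluded‑minor description of binary gammoids and a little minor bookkeeping. First write a representation of $M''$ explicitly: if $A$ represents $M$ and $\chi_S$ denotes the indicator row vector of $S\subseteq E(M)$, then $M''$ is represented by
\[
\begin{pmatrix} A & 0 & 0 & 0\\ \chi_{T} & 1 & 0 & 1\\ \chi_{T'} & 0 & 1 & 1\end{pmatrix},
\]
with last three columns $p,q,r$. In particular $M''$ is binary, so the only issue is whether it is a gammoid. Two elementary lemmas fall out of the matrix: (i) a \emph{commutation lemma} — if $C_{0},D_{0}\subseteq E(M)$ are disjoint from $T$ (hence from $T'$), then the two bottom rows vanish on $C_{0}\cup D_{0}$, so the row reductions realizing $M/C_{0}\setminus D_{0}$ leave them intact, whence the $3$-fold of $M/C_{0}\setminus D_{0}$ equals $M''/C_{0}\setminus D_{0}$; and (ii) a \emph{reduction lemma} — $M''/\{p,q\}\setminus r=M$, and more generally deleting and/or contracting the new elements $p,q,r$ returns either $M$ itself or a coextension of $M$ obtained by appending one or two of the rows $\chi_{T},\chi_{T'},\chi_{T\setminus T'}$ to $A$.

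The computational heart is: with $M=M(G_4)=U_{1,3}$, taking $T$ to be two of its three elements and $T'$ one of them, the displayed matrix becomes a simple binary matroid of rank $3$ on six points, which can only be $M(K_4)$; thus the $3$-fold of $M(G_4)$ is $M(K_4)$. Recall that the binary gammoids are exactly the cycle matroids of series‑parallel graphs, equivalently the binary matroids with no $M(K_4)$-minor (every binary nongraphic matroid and every non‑series‑parallel graphic matroid has an $M(K_4)$-minor, while gammoids are closed under series and parallel extension); hence $M(K_4)$ is binary but not a gammoid. For necessity I would argue the contrapositive: if $M/C_{0}\setminus D_{0}\cong U_{1,3}$ on $\{e,f,g\}$, then $\{e,f,g\}$, being a cocircuit of that minor, lies in a cocircuit $C^{*}$ of $M$ with $|C^{*}|\ge 3$, so $T=\{e,f\}\subsetneq C^{*}$, $T'=\{e\}$ is an admissible choice for the $3$-fold, and by (i) the matroid $M''/C_{0}\setminus D_{0}$ is the $3$-fold of $U_{1,3}$, which we have just identified with $M(K_4)$. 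Hence $M''$ has an $M(K_4)$-minor and is not a binary gammoid.

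The ``if'' direction is then essentially free: any cocircuit $C^{*}$ of $M$ satisfies $M/(E(M)\setminus C^{*})=U_{1,|C^{*}|}$, so $M$ has an $M(G_4)$-minor if and only if it has a cocircuit of size $\ge 3$, i.e.\ if and only if a $3$-fold of $M$ can be formed at all; so a binary gammoid with no $M(G_4)$-minor admits no $3$-fold and the implication holds vacuously. I expect the delicate points to be the commutation lemma (getting right that \emph{all} of $C_{0},D_{0}$ may be contracted or deleted even when they meet $C^{*}$, provided they avoid $T$) and the explicit identification of the $3$-fold of $M(G_4)$ with $M(K_4)$. If instead one wants the statement for a version of the operation that does not presuppose a cocircuit of size $\ge 3$, then sufficiency becomes a finite case analysis on $\{p,q,r\}\cap(P\cup Q)$ for a hypothetical minor $M''/P\setminus Q\cong M(K_4)$: when all of $p,q,r$ survive they form a triangle of $M(K_4)$, and contracting two of them and deleting the third turns $M(K_4)$ into $U_{1,3}$ while turning $M''$ into $M$; when exactly one or two are deleted, (ii) recovers $M$ after contracting the survivors, and contracting one or two edges of $M(K_4)$ always leaves a $U_{1,3}$-minor; the genuinely hard case is when two or three of $p,q,r$ are deleted and (ii) leaves a proper coextension of $M$ along $\chi_{T}$ (and possibly $\chi_{T'}$), where one must show that coextending along nested subsets of a cocircuit cannot create an $M(K_4)$-minor unless $M$ already has an $M(K_4)$- or $U_{1,3}$-minor. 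Controlling that coextension would be the main obstacle; everything else is routine once the matrix of $M''$ and lemmas (i)--(ii) are in hand.
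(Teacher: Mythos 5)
Your argument is correct, but it departs from the paper's proof in both directions, and in each case the departure is substantive. For necessity you perform the same computation as the paper (the $3$-fold of $M(G_4)\cong U_{1,3}$ is a simple rank-$3$ binary matroid on six elements, hence $F_7\setminus e\cong M(K_4)$), but you supply two ingredients the paper leaves implicit: the commutation of the $3$-fold with deletions and contractions that avoid $T$, and the observation that the cocircuit $\{e,f,g\}$ of the $U_{1,3}$-minor extends to a cocircuit $C^{*}$ of $M$, so that $T=\{e,f\}$ is an admissible splitting set for $M$ itself. The paper only computes the $3$-fold of $M(G_4)$ in isolation and never explains why an $M(G_4)$-minor of $M$ forces an $M(K_4)$-minor of $M''$; your version closes that gap. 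For sufficiency the two proofs are entirely different: the paper pushes $M''$ through Theorem \ref{thm gam to gam wrt 3 elt in3fold}, deletes $q$ to reach $M(F)$, invokes Theorem \ref{thm GF3 in 3fold} on the collections $\mathcal{GF}_k$ and the quotients of $M(F)$, and finally strips the loops $p,r$ to exhibit an $M(G_4)$-minor of $M$; you instead note that $M$ has a $U_{1,3}$-minor if and only if it has a cocircuit of size at least $3$, which by Definition \ref{defn 3 fold in splitting} is precisely the condition for an admissible $T=\{x,y\}$ to exist, so a binary gammoid with no $M(G_4)$-minor admits no $3$-fold at all and the implication holds vacuously. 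That observation is correct under the paper's stated definition, short-circuits the machinery of Sections 3 and 4, and incidentally exposes that the theorem as written asserts that the $3$-fold of a binary gammoid is never a gammoid whenever the operation is defined. The one caveat is that if the operation were intended to apply without the cocircuit hypothesis on $T$, your fallback case analysis on $\{p,q,r\}\cap(P\cup Q)$ would become necessary, and you rightly flag its coextension case as unresolved; for the theorem as actually stated, however, your main line is complete.
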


\section{{Another approach to 3-fold}}  
In this section, we define {\it 3-fold} operation using the concept of splitting operation. Figure \ref{graph 3fold splitting} illustrate {\it 3-fold} operation using splitting. 
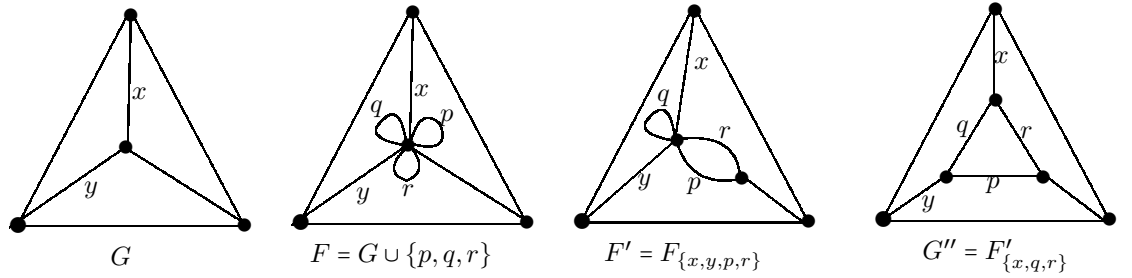
\begin{figure}[h!]
	\centering
% This is a LaTeX picture output by TeXCAD.
% File name: [3-fold-3poin in graphs -using splitting.pic].
% Version of TeXCAD: 4.51
% Reference / build: 27-Nov-2018 (rev. a75)
% For new versions, check: http://texcad.sf.net/
% Options on the following lines.
%\grade{\on}
%\emlines{\off}
%\epic{\off}
%\beziermacro{\on}
%\reduce{\on}
%\snapping{\off}
%\pvinsert{% Your \input, \def, etc. here}
%\quality{8.000}
%\graddiff{0.005}
%\snapasp{1}
%\zoom{9.5136}
\unitlength 1mm % = 2.845pt
\linethickness{0.4pt}
\ifx\plotpoint\undefined\newsavebox{\plotpoint}\fi % GNUPLOT compatibility
\begin{picture}(154.332,39.727)(0,0)
	\put(8.409,10.091){\circle*{2.102}}
	\put(38.471,10.091){\circle*{1.734}}
	\put(23.335,38.051){\circle*{1.88}}
	\put(22.704,20.392){\circle*{1.784}}
	%\emline(22.704,37.63)(8.199,9.881)
	\multiput(22.704,37.63)(-.0337325581,-.0645325581){430}{\line(0,-1){.0645325581}}
	%\end
	\put(8.199,9.881){\line(1,0){30.272}}
	%\emline(38.471,9.881)(23.335,38.471)
	\multiput(38.471,9.881)(-.0337104677,.063674833){449}{\line(0,1){.063674833}}
	%\end
	%\emline(23.335,38.471)(22.915,20.602)
	\multiput(23.335,38.471)(-.0323077,-1.3745385){13}{\line(0,-1){1.3745385}}
	%\end
	%\emline(22.915,20.602)(7.148,9.67)
	\multiput(22.915,20.602)(-.0485138462,-.0336369231){325}{\line(-1,0){.0485138462}}
	%\end
	%\emline(22.494,20.392)(39.102,9.881)
	\multiput(22.494,20.392)(.0532307692,-.0336891026){312}{\line(1,0){.0532307692}}
	%\end
	\put(83.354,10.616){\circle*{2.102}}
	\put(113.417,10.616){\circle*{1.734}}
	\put(98.28,38.576){\circle*{1.88}}
	\put(104.587,16.292){\circle*{1.784}}
	\put(95.968,21.337){\circle*{1.784}}
	%\emline(97.65,38.156)(83.144,10.406)
	\multiput(97.65,38.156)(-.0337348837,-.0645348837){430}{\line(0,-1){.0645348837}}
	%\end
	\put(83.144,10.406){\line(1,0){30.272}}
	%\emline(113.417,10.406)(98.28,38.997)
	\multiput(113.417,10.406)(-.0337126949,.0636770601){449}{\line(0,1){.0636770601}}
	%\end
	%\emline(105.007,16.293)(112.996,10.196)
	\multiput(105.007,16.293)(.044138122,-.033685083){181}{\line(1,0){.044138122}}
	%\end
	\put(123.402,10.827){\circle*{2.102}}
	\put(153.465,10.827){\circle*{1.734}}
	\put(138.328,38.787){\circle*{1.88}}
	\put(144.635,16.503){\circle*{1.784}}
	\put(131.812,16.503){\circle*{1.784}}
	%\emline(137.698,38.367)(123.192,10.617)
	\multiput(137.698,38.367)(-.0337348837,-.0645348837){430}{\line(0,-1){.0645348837}}
	%\end
	\put(123.192,10.617){\line(1,0){30.272}}
	%\emline(153.465,10.617)(138.328,39.208)
	\multiput(153.465,10.617)(-.0337126949,.0636770601){449}{\line(0,1){.0636770601}}
	%\end
	%\emline(137.698,26.384)(131.391,16.083)
	\multiput(137.698,26.384)(-.033727273,-.055085561){187}{\line(0,-1){.055085561}}
	%\end
	%\emline(145.056,16.503)(153.044,10.407)
	\multiput(145.056,16.503)(.044132597,-.033679558){181}{\line(1,0){.044132597}}
	%\end
	%\emline(131.601,16.503)(122.772,10.407)
	\multiput(131.601,16.503)(-.048779006,-.033679558){181}{\line(-1,0){.048779006}}
	%\end
	\put(138.118,39.208){\line(0,-1){12.614}}
	\put(138.379,26.75){\circle*{1.784}}
	\put(131.811,16.608){\line(1,0){13.139}}
	%\emline(138.118,27.014)(144.53,16.713)
	\multiput(138.118,27.014)(.033570681,-.053931937){191}{\line(0,-1){.053931937}}
	%\end
	\put(24.491,27.645){\makebox(0,0)[cc]{$x$}}
	\put(99.226,31.534){\makebox(0,0)[cc]{$x$}}
	\put(142.427,22.389){\makebox(0,0)[cc]{$r$}}
	\put(138.013,15.452){\makebox(0,0)[cc]{$p$}}
	\put(133.913,22.81){\makebox(0,0)[cc]{$q$}}
	\put(139.064,32.375){\makebox(0,0)[cc]{$x$}}
	\put(22.074,6){\makebox(0,0)[cc]{$G$}}
	\put(45.934,10.406){\circle*{2.102}}
	\put(75.996,10.406){\circle*{1.734}}
	\put(60.86,38.366){\circle*{1.88}}
	\put(60.229,20.707){\circle*{1.784}}
	%\emline(60.229,37.945)(45.724,10.196)
	\multiput(60.229,37.945)(-.0337325581,-.0645325581){430}{\line(0,-1){.0645325581}}
	%\end
	\put(45.724,10.196){\line(1,0){30.272}}
	%\emline(75.996,10.196)(60.86,38.786)
	\multiput(75.996,10.196)(-.0337104677,.063674833){449}{\line(0,1){.063674833}}
	%\end
	%\emline(60.86,38.786)(60.44,20.917)
	\multiput(60.86,38.786)(-.0323077,-1.3745385){13}{\line(0,-1){1.3745385}}
	%\end
	%\emline(60.44,20.917)(44.673,9.985)
	\multiput(60.44,20.917)(-.0485138462,-.0336369231){325}{\line(-1,0){.0485138462}}
	%\end
	%\emline(60.019,20.707)(76.627,10.196)
	\multiput(60.019,20.707)(.0532307692,-.0336891026){312}{\line(1,0){.0532307692}}
	%\end
	\put(62.016,27.96){\makebox(0,0)[cc]{$x$}}
	%\emline(98.28,38.576)(95.653,21.233)
	\multiput(98.28,38.576)(-.03367949,-.22234615){78}{\line(0,-1){.22234615}}
	%\end
	%\emline(95.653,21.233)(83.354,10.406)
	\multiput(95.653,21.233)(-.0383146417,-.033728972){321}{\line(-1,0){.0383146417}}
	%\end
	\qbezier(95.863,21.443)(104.009,22.599)(104.377,16.187)
	\qbezier(104.377,16.187)(97.702,14.873)(95.863,21.338)
	\qbezier(60.44,20.812)(61.806,25.963)(64.434,23.545)
	\qbezier(64.434,23.545)(66.011,19.393)(60.44,20.917)
	\qbezier(60.019,20.602)(59.231,26.804)(56.551,23.755)
	\qbezier(56.551,23.755)(54.343,21.548)(60.124,20.602)
	\qbezier(60.124,20.497)(56.603,17.081)(60.019,15.977)
	\qbezier(60.019,15.977)(63.278,17.344)(60.23,20.812)
	\put(65.38,24.386){\makebox(0,0)[cc]{$p$}}
	\put(56.025,25.542){\makebox(0,0)[cc]{$q$}}
	\put(60.23,14.716){\makebox(0,0)[cc]{$r$}}
	\qbezier(95.758,21.233)(94.97,27.435)(92.29,24.386)
	\qbezier(92.29,24.386)(90.082,22.179)(95.863,21.233)
	\put(94.181,27.014){\makebox(0,0)[cc]{$q$}}
	\put(59.284,6){\makebox(0,0)[cc]{$F=G \cup \{p,q,r\}$}}
	\put(96.809,6){\makebox(0,0)[cc]{$F'=F_{\{x,y,p,r\}}$}}
	\put(138.328,6){\makebox(0,0)[cc]{$G''=F'_{\{x,q,r\}}$}}
	\put(18.079,14.821){\makebox(0,0)[cc]{$y$}}
	\put(54.238,13.98){\makebox(0,0)[cc]{$y$}}
	\put(91.658,15.767){\makebox(0,0)[cc]{$y$}}
	\put(102.38,22.389){\makebox(0,0)[cc]{$r$}}
	\put(98.28,15.241){\makebox(0,0)[cc]{$p$}}
	\put(129.394,12.929){\makebox(0,0)[cc]{$y$}}
\end{picture}

\caption{{\it 3-fold} using splitting.}
\label{graph 3fold splitting}
\end{figure}
\noindent Let $\{x,y\}$ be a proper subset of a cutset of $G$ and $G$ be as given in the Figure \ref{graph 3fold splitting}. After adding three loops $\{p,q,r\}$ to $G$ we get graph $F$, later $F'$ is a graph obtained by splitting $F$ using $\{x,y,p,r\}$. Then $G''$ is {\it 3-fold} of $G$ and it is obtained by splitting $F'$ using $\{x,q,r\}$.\\ 
We define {\it 3-fold} in terms of splitting operation as follows.
\begin{defn}\label{defn 3 fold in splitting}
	Let $M$ be a binary matroid and $T=\{x,y\}$ be a proper subset of a cocircuit of $M$. Let $A$ be a matrix representation of $M$. Obtain a matrix $B$ by appending three columns labeled $p, q, r$ containing all zero elements. Let matrix $C=B_{\{x,y,p,r\}}$ and $D=C_{\{x,q,r\}}$. Then the matroid $M(D)$ is {\it 3-fold} of $M$ and it is denoted by $M''$.
\end{defn}
Thus it is very clear that $M''=[[M \cup \{p,q,r\}]_{\{x,y,p,r\}}]_{\{x,q,r\}}$. Let $N=M \cup \{p,q,r\}$, $Y=\{x,y,p,r\}$, $X=\{x,q,r\}$, thus $M''=[N_Y]_X$. In this paper, we find excluded minors for a binary gammoid whose 3-fold $M''$ is a binary gammoid.  Assuming that $N_Y=P$ then we need to find excluded minors for a binary gammoid whose $M''=[N_Y]_X=P_X$ is a binary gammoid. Here $|X|=3$, then by Theorem \ref{thm gam to gam wrt 3 elt in3fold}, $P$ does not contain minors $M(G_i)$, for $i=1,2,3$. Here $M(G_i)_X \cong M(K_4)$, for $X=\{x,q,r\}$, where $G_i$ and $X=\{x,q,r\}$ as shown in Figure \ref{fig gam to gam wrt 3 element in3fold}.\\
Thus  $N_Y \backslash H_1/H_2 =M(G_i)$, for $i=1,2,3$ and for some $H_1, H_2$ subset of $E(N)$. As $\{x,q,r\} \subseteq E(M(G_i))$, for $i=1,2,3$, $q \notin H_1 \cup H_2$, also $q \notin Y$. Thus, $(N_Y \backslash H_1/ H_2)\backslash q =M(G_i) \backslash q$. Thus, $[N\backslash {q}]_Y \backslash H_1 /H_2 = M(F)$, where $F$ is as shown in the Figure \ref{fig minor of pre gammoid}.
%\begin{lem}
%	Let $N=M \cup \{a,b,c\}$ for a binary gammoid $M$ such that $N_Y /H_1 \backslash H_2 =M(G_i)$ 
%\end{lem}
\begin{figure}[h]
	\centering
	% This is a LaTeX picture output by TeXCAD.
	% File name: [minor.pic].
	% Version of TeXCAD: 4.51
	% Reference / build: 27-Nov-2018 (rev. a75)
	% For new versions, check: http://texcad.sf.net/
	% Options on the following lines.
	%\grade{\on}
	%\emlines{\off}
	%\epic{\off}
	%\beziermacro{\on}
	%\reduce{\on}
	%\snapping{\off}
	%\pvinsert{% Your \input, \def, etc. here}
	%\quality{8.000}
	%\graddiff{0.005}
	%\snapasp{1}
	%\zoom{4.0000}
	\unitlength 1mm % = 2.845pt
	\linethickness{0.4pt}
	\ifx\plotpoint\undefined\newsavebox{\plotpoint}\fi % GNUPLOT compatibility
	\begin{picture}(28.264,24.188)(0,0)
		\put(26.81,8.438){\circle*{1.5}}
		\put(16.81,23.438){\circle*{1.5}}
		\put(7.31,8.469){\circle*{1.5}}
		\put(7.158,8.487){\line(1,0){19.534}}
		%\emline(16.595,23.588)(27,8.574)
		\multiput(16.595,23.588)(.0336731392,-.0485889968){309}{\line(0,-1){.0485889968}}
		%\end
		%\emline(16.743,23.291)(7.081,8.574)
		\multiput(16.743,23.291)(-.0336655052,-.0512787456){287}{\line(0,-1){.0512787456}}
		%\end
		\qbezier(16.892,23.44)(28.264,21.135)(26.852,8.426)
		\qbezier(7.23,8.574)(5,20.764)(16.446,23.44)
		%\put(5.75,19.75){\makebox(0,0)[cc]{$x$}}
		%\put(28,19){\makebox(0,0)[cc]{$c$}}
		\put(17.25,3){\makebox(0,0)[cc]{$F$}}
	\end{picture}
	
	\caption{Minor}
	\label{fig minor of pre gammoid}
\end{figure}
Let $S=N \backslash q=M \cup \{p,r\}$. Thus our problem reduces to finding excluded minors for a binary gammoid with at least two loops whose splitting using four elements does not contain minor $M(F)$. We use the technique given by Mundhe et al. \cite{gm}, to find excluded minors. \\
In the next section, we outline some results which are used to prove the main theorem.
\section{Preliminary Results}
In this section, we proved preliminary results that are used to find excluded minors for binary gammoid whose splitting using four elements does not contain $M(F)$ as a minor. \\
\noindent {\bf Denote}, $\mathcal{GF}_{k}$ be the collection of binary gammoids whose splitting using $k$ elements contains $M(F)$ as a minor, where $F$ is as given in the Figure \ref{fig minor of pre gammoid}.
% \begin{lem}
	%	Let $S$ be a binary matroid with $T, T_1 \subseteq E(S)$ and if $T_1 \nsubseteq T$ then $S_T \backslash T_1 =(S \backslash T_1)_T$ and $S_T / T_1=(S/T_1)_T$. 
	%\end{lem}
	%Proof of the above lemma is straightforward from the definition of splitting operation.
	\begin{lem} \label{main lemma in 3fold}
		Let $S$ be a binary gammoid such that $S_Y$ has $M(F)$ a minor and $Y \subseteq E(S)$ with $|Y|\geq 2$, then $S$ has a minor $R$ for which one of the below holds. \\
		i) $R_Y \cong M(F)$.\\
		ii) $R_Y/Y' \cong M(F)$, where $Y' \subseteq Y$.\\
		iii) $R$ has a minor $M(F)$. \\
		iv) $R$ is one element extension of some minimal minor in the collection $\mathcal{GF}_{k-1}$.
	\end{lem}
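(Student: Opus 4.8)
The plan is to pin down a concrete realization of $M(F)$ as a minor of $S_Y$ and then push as many of the minor operations as possible down through the splitting of Definition~\ref{def_splitting_in_3fold}, so that whatever is left over falls into one of the four listed patterns. Since this splitting introduces no new ground-set element, $E(S_Y)=E(S)$, and the hypothesis supplies disjoint sets $D,C\subseteq E(S)$ with $S_Y\backslash D/C\cong M(F)$. I will use two elementary facts, both read off the matrix obtained from a representation $A$ of $S$ by appending a row $\chi_Y$ with $1$'s exactly in the columns of $Y$: deleting any column gives $(S_Y)\backslash e=(S\backslash e)_{Y\setminus e}$, and pivoting in a column disjoint from $Y$ gives $(S_Y)/e=(S/e)_Y$ for $e\notin Y$, because in each case the appended row is left intact. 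Contracting an element \emph{of} $Y$ is the one genuinely different move, and it is precisely this that produces the case split in the conclusion.

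\emph{Deletions meeting $Y$.} If $D\cap Y\neq\emptyset$, fix $d\in D\cap Y$. Then $(S\backslash d)_{Y\setminus d}=(S_Y)\backslash d$ still contains $M(F)$ as a minor, via $\backslash(D\setminus d)/C$, so the binary gammoid $S\backslash d$ lies in $\mathcal{GF}_{k-1}$, witnessed by the $(k-1)$-element set $Y\setminus d$. Choose a minor $R_0$ of $S\backslash d$ that is minimal in $\mathcal{GF}_{k-1}$, write $R_0=(S\backslash d)\backslash A/B$, and set $R=S\backslash A/B$. Since $d\notin A\cup B$, $R$ is a minor of $S$ with $R\backslash d=R_0$, i.e.\ $R$ is a one-element extension of $R_0$; this is conclusion~(iv).

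\emph{No deletion meets $Y$.} Now $D\cap Y=\emptyset$. Put $S_2=S\backslash D/(C\setminus Y)$, a minor of $S$ that still contains $Y$; pushing $D$ and $C\setminus Y$ down through the splitting (legitimate because these elements avoid $Y$) yields $(S_2)_Y/(C\cap Y)\cong M(F)$. If $C\cap Y=\emptyset$ this reads $(S_2)_Y\cong M(F)$, which is conclusion~(i) with $R=S_2$. If $C\cap Y\neq\emptyset$, then $R=S_2$ and $Y'=C\cap Y$ give $R_Y/Y'\cong M(F)$, which is conclusion~(ii). Finally, in the degenerate subcase where these contractions cause the appended row to be absorbed into the others, so that $(S_2)_Y/(C\cap Y)$ collapses to the ordinary contraction $S_2/(C\cap Y)\cong M(F)$ (this occurs exactly when some contracted element is, at that stage, a loop of the current base matroid, the loops $p,r$ of $S$ that the $3$-fold reduction builds in being a case in point), the matroid $R=S_2$ already contains $M(F)$ as a minor, which is conclusion~(iii).

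The step I expect to be the main obstacle is controlling contraction \emph{along} $Y$. A non-loop $e\in Y$ cannot be pushed down, since the pivot in $A$ that realizes the contraction also adds the pivoted row to $\chi_Y$; the splitting set then becomes the symmetric difference of $Y\setminus e$ with the support of that row, and may even become empty. This is why the contractions along $Y$ are \emph{retained} in (ii) and (iii) rather than absorbed, and why (iv) is stated as a one-element extension of a minor one level down rather than as an isomorphism. Beyond verifying the two matrix identities and the loop/non-loop dichotomy, the rest is bookkeeping: checking in each case that the sets deleted and contracted are disjoint from the elements kept, so that $R$ is a genuine minor of $S$ (the existence of a minor-minimal member of $\mathcal{GF}_{k-1}$ below $S\backslash d$ being automatic, since $S\backslash d$ has only finitely many minors).
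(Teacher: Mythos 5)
Your proposal is correct and follows essentially the same route as the paper: both split the deletion and contraction sets according to their intersection with $Y$, commute the parts disjoint from $Y$ past the splitting via the matrix identities $(S_Y)\backslash e=(S\backslash e)_{Y\setminus e}$ and $(S_Y)/e=(S/e)_Y$ for $e\notin Y$, and then case-split on what remains inside $Y$. The only (harmless) differences are that you route every case with $D\cap Y\neq\emptyset$ to conclusion (iv), whereas the paper sends the subcase where all of $Y$ is deleted to conclusion (iii), and you are slightly more careful than the paper in descending to a \emph{minimal} member of $\mathcal{GF}_{k-1}$ as the statement of (iv) requires.
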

	\begin{proof}
		Let $S$ be a binary gammoid such that $S_Y$ has a minor $M(F)$. Thus $S_Y \backslash Y_1 /Y_2 \cong M(F)$ for some subsets $Y_1$ and $Y_2$ of $E(S)$. Let $Y_i'=Y \cap Y_i$ and $Y_i''=Y_i-Y_i'$, for $i=1,2$, thus $Y_i' \subseteq Y$ and $Y_i=Y_i' \cup Y_i''$ for $i=1,2$. Then, $S_Y \backslash Y_1 /Y_2 = S_Y \backslash (Y_1' \cup Y_1'') / (Y_2' \cup Y_2'')= [(S \backslash Y_1'' /Y_2'')_Y] \backslash Y_1'/Y_2'  $. Let $R=S \backslash Y_1'' /Y_2''$, hence $R$ is a minor of $S$. Thus $R_Y \backslash Y_1' /Y_2' \cong M(F)$. \\
		If $Y_1'= \phi$ and $Y_2'=\phi$, then (i) holds.\\
		If $Y_1' =\phi$ and $Y_2' \neq \phi$, then (ii) holds.\\
		If $Y_1' \neq \phi$, as $Y_1' \subseteq Y$, $|Y_1|\leq k$. If $|Y_1|=k$, then $Y_1'=Y$ and $Y_2'=\phi$. Thus $R_Y \backslash Y_1 =R \backslash Y=M(F)$. Hence $R$ has a minor $M(F)$, (iii) holds. If $|Y_1'| < k$, let $y \in Y_1'$, $T=Y-y$ and $T'=Y_1'-y$. Then $R_Y \backslash Y_1' /Y_2' =(R\backslash y)_{Y-y} \backslash {Y_1'-y} /Y_2'=M(F)$. Let $R \backslash y=Z$, then $Z_T \backslash T'/Y_2'=M(F)$. Thus $Z$ is a minor of the collection $\mathcal{GF}_{k-1}$ and $Z=R \backslash y$, then $R$ is single element extension of $Z$, hence (iv) holds.  
	\end{proof}
	\begin{lem}\label{rel lemma in 3fold}
		Let $R$ be a binary gammoid as in Lemma \ref{main lemma in 3fold} (i) and (ii). Then there exists a binary gammoid $Q$ and $a \in E(Q)$, such that $Q \backslash a =M(F)$ and $R=Q/a$ or $R$ is a coextension of $Q/a$ by at most $k$ elements.
	\end{lem}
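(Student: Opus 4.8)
The plan is to realise both cases of Lemma~\ref{main lemma in 3fold} by means of the element-splitting operation (Definition~\ref{def_element_sp_in_3fold}), and then to use that $M(F)$ is small enough that \emph{every} single-element extension of it is a binary gammoid. The two facts I would rely on are the identities $M_T'\backslash a=M_T$ and $M_T'/a=M$, valid for any binary matroid $M$ and $T\subseteq E(M)$, where $a$ denotes the element adjoined in Definition~\ref{def_element_sp_in_3fold}; both are immediate from the matrix description, since the column of $a$ is a unit vector, so deleting it returns the splitting matrix $A_T$ while contracting it returns $A$.

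First I would treat case~(i), where $R_Y\cong M(F)$. Put $Q=R_Y'$ and let $a$ be the adjoined element; then $Q\backslash a=R_Y\cong M(F)$ and $Q/a=R$, so $R=Q/a$ as demanded. For case~(ii), where $R_Y/Y'\cong M(F)$ with $Y'\subseteq Y$, put $Q=R_Y'/Y'$; since $a\notin Y'$, deletion of $a$ commutes with contraction of $Y'$, so $Q\backslash a=(R_Y'\backslash a)/Y'=R_Y/Y'\cong M(F)$ and $Q/a=(R_Y'/a)/Y'=R/Y'$. As $Q/a=R/Y'$ is obtained from $R$ by contracting the elements of $Y'$, the matroid $R$ is a coextension of $Q/a$ by $|Y'|\le|Y|=k$ elements. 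In both cases $Q$ is binary and $a\in E(Q)$.

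It remains to verify that $Q$ is a gammoid. Since $Q\backslash a=M(F)$, the matroid $Q$ is a single-element extension of $M(F)$; and because the graph $F$ of Figure~\ref{fig minor of pre gammoid} has three vertices, $M(F)$ has rank two, so $Q$ has rank at most three. If the rank of $Q$ is at most two, then $Q$ is transversal (every matroid of rank at most two is transversal) and hence a gammoid; if the rank of $Q$ is three, then $a$ is a coloop of $Q$, so $Q=M(F)\oplus(\text{coloop }a)$ is a direct sum of gammoids and hence a gammoid. (In the instances actually needed for the theorem, $Y$ contains a loop of $R$ --- one of $p,r$ --- whose column in $R_Y'$ equals that of $a$, which exhibits $Q$ directly as a parallel extension of $M(F)$.) Therefore $Q$ is a binary gammoid with the asserted properties. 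The substantive content is the verification of the two element-splitting identities together with this last case check on extensions of $M(F)$; I expect that check to be the main obstacle, though it is a short finite analysis made transparent by $M(F)$ having rank two, and the remaining minor manipulations commute as expected.
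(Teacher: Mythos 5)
Your proof follows essentially the same route as the paper's: in case (i) you take $Q=R_Y'$ and in case (ii) $Q=R_Y'/Y'$, relying on the identities $R_Y'\backslash a=R_Y$ and $R_Y'/a=R$ exactly as the paper does, and the commutation of $\backslash a$ with $/Y'$ is handled identically. The one point you add that the paper omits is the explicit verification that $Q$ is in fact a binary gammoid (via the rank-at-most-two/coloop case analysis); since the lemma asserts the existence of a binary \emph{gammoid} $Q$ but the paper's proof never checks this, your addition closes a small gap rather than departing from the argument.
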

	\begin{proof}
		Let $R$ be a binary gammoid such that $R_Y \cong M(F)$ or $R_Y /Y' \cong M(F)$ for some $Y' \subseteq Y \subseteq E(R)$. From the Definitions \ref{def_splitting_in_3fold} and \ref{def_element_sp_in_3fold} it is very clear that $R_Y'/a =R$ and $R_Y' \backslash a = R_Y$, where $R_Y'$ is an element splitting matroid of $R$ and $R_Y$ is splitting matroid of $R$. \\
		If $R_Y \cong M(F)$, then assume $Q= R_Y'$, $E(Q)=E(R)\cup \{a\}$. Here $Q \backslash a =R_Y' \backslash a = R_Y=M(F)$ and $Q/a=R_Y'/a=R$.\\
		If $R_Y/Y'=M(F)$, then assume $Q=R_Y'/Y'$ then $Q\backslash a=R_Y'/Y' \backslash a=R_Y' \backslash a/Y'=R_Y/Y'=M(F)$ and $Q/a=R_Y'/Y' /a=R_Y'/a /Y'=R/Y'$. Thus $R$ is a coextension of $Q/a$ by $Y'$. As $|Y'| \leq k$, $R$ is a coextension  of $Q/a$ by at most $k$ elements. 
	\end{proof}
	If, for some $a \in E(Q)$, $Q \backslash a =M(F)$ then $Q/a$ is called a quotient of $M(F)$. From Theorem \ref{rel lemma in 3fold}, minor $R$ is either a quotient or a coextension of a quotient. Thus to find excluded minor $R$, we need to find graphic quotients of $M(F)$. Following are a few properties that need to be satisfied by the quotient of $M(F)$.
	\begin{lem}\label{3fold quotient properties}
		Let $Q$ be a binary matroid with $a \in E(Q)$, where $a$ is not a loop or a coloop, such that $Q \backslash a=M(F)$ then $Q/a$ satisfies the following.\\
		i) $Q/a$ contains at most two loops.\\
		ii) $Q/a$ does not contain 2-cocircuit.\\
		iii)  $Q/a$ can not contain more than 4 parallel elements.
	\end{lem}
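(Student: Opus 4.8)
The plan is to determine the matroid $Q$ essentially completely, and then read off all three assertions at once. I would begin by recording the relevant data of $M(F)$. The graph $F$ of Figure~\ref{fig minor of pre gammoid} has three vertices and five edges, two of the three sides of the triangle being doubled; hence $M(F)$ has rank $2$, it is loopless and coloopless, its parallel classes have sizes $2,2,1$, and listing the three vertex bonds shows that the cocircuits of $M(F)$ have sizes $3,3,4$. In particular $M(F)$ has no $2$-cocircuit and no $3$-element parallel class.

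Next I would pin down $Q$. Since $a$ is not a coloop, $r(Q)=r(Q\backslash a)=r(M(F))=2$, and since $a$ is not a loop while $M(F)$ is loopless, $Q$ is a loopless rank-$2$ binary matroid on six elements. A simple binary matroid of rank $2$ has at most three elements, so a loopless binary rank-$2$ matroid has at most three parallel classes; moreover a single deletion cannot increase the number of parallel classes. As $Q\backslash a=M(F)$ already has three parallel classes, $Q$ has exactly three, so $a$ cannot form a parallel class of its own, i.e. $a$ is parallel in $Q$ to at least one element of $M(F)$. By transitivity of parallelism the parallel class of $a$ in $Q$ is then $\{a\}$ together with one entire parallel class of $M(F)$, so up to the symmetry interchanging the two doubled sides there are exactly two cases: the parallel class of $a$ in $Q$ has size $3$ (it contains a doubled side of $F$) or size $2$ (it contains the single edge of $F$).

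Finally I would contract $a$. Contraction drops the rank to $1$, so $Q/a$ is a rank-$1$ matroid whose loops are precisely the elements parallel to $a$ in $Q$ and whose non-loops form a single parallel class, which is also its unique cocircuit. In the size-$3$ case $Q/a$ has two loops and one parallel class of size $3$; in the size-$2$ case $Q/a$ has one loop and one parallel class of size $4$. Either way $Q/a$ has at most two loops, its only cocircuit has size $3$ or $4$ so there is no $2$-cocircuit, and no parallel class has more than four elements. This gives (i), (ii) and (iii) simultaneously.

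The steps requiring care are the standard minor bookkeeping rather than anything deep: that an element is a loop of $Q/a$ exactly when it is parallel to $a$ in $Q$ (using that $a$ is not a loop and $M(F)$ is loopless), that a cocircuit of $Q/a$ is exactly a cocircuit of $Q$ avoiding $a$, and that a $2$-element cocircuit of $Q$ avoiding $a$ would descend to a $2$-cocircuit of $Q\backslash a=M(F)$ (here one uses that $a$ is not a coloop and $M(F)$ is coloopless), which is impossible. The one genuinely essential hypothesis beyond these is binarity, which caps the number of parallel classes of $Q$ at three and thereby forces $a$ into an existing parallel class; this is the point that makes the whole case analysis finite, and I expect no obstacle beyond verifying it cleanly.
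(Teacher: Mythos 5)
Your proof is correct, but it takes a genuinely different route from the paper. The paper proves (i)--(iii) separately by contradiction, each time transporting a hypothetical defect of $Q/a$ (a third loop, a $2$-cocircuit, a fifth parallel element) up to $Q$ and then down to $Q\backslash a=M(F)$, where it contradicts the known features of $M(F)$ (loopless, parallel classes of size at most $2$, no cocircuit of size at most $2$). You instead classify $Q$ outright: since a loopless binary matroid of rank $2$ has at most three parallel classes and $M(F)$ already has three, the element $a$ must join one of the existing classes of $M(F)$, leaving exactly two possibilities for $Q$; then $Q/a$ is explicitly a rank-$1$ matroid with either two loops and a triple point or one loop and a quadruple point, and all three assertions are read off at once. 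Your bookkeeping (loops of $Q/a$ are the elements parallel to $a$; the nonloops of a rank-$1$ matroid form its unique cocircuit; deletion cannot increase the number of parallel classes) is sound, and your identification of $M(F)$ as the rank-$2$ graphic matroid with parallel classes of sizes $2,2,1$ and cocircuits of sizes $3,3,4$ matches Figure \ref{fig minor of pre gammoid}. What your approach buys is that it is sharper: it delivers not just the three necessary conditions but the full list of quotients in the non-loop, non-coloop case, i.e.\ it essentially subsumes Lemma \ref{quotient of minor F} (your two cases are the matroids $M(Q_3)\cong M(Q_4)$ and $M(Q_2)$ of Figure \ref{fig 3fold quotient}), at the cost of invoking binarity explicitly (the three-point bound on a binary line), which the paper's softer element-by-element argument does not need.
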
 
	\begin{proof}
		Let $Q$ be a binary matroid with $a \in E(Q)$ and $a$ is not a loop or a coloop, such that $Q\backslash a =M(F)$.\\
		i) Suppose that $Q/a$ has more than two loops, then $Q$ will contain at least a loop or more than 3 parallel elements. Hence, $Q \backslash a$ will also contain at least a loop or more than 3 parallel elements, a contradiction. Hence $Q/a$ can not contain more than two loops. \\
		ii) If $Q/a$ has 2-cocircuit say $\{x,y\}$, then $Q$ will contain 2-cocircuit $\{x,y\}$ and hence $Q\backslash a=M(F)$ has a cocircuit in $\{x,y\}$. Therefore $M(F)$ has a 1-cociruit or a 2-cocircuit, a contradiction. \\
		iii) Suppose $Q/a$ has more than four parallel elements then $Q$ will have at least three parallel elements, hence $Q \backslash a= M(F)$ will have at least three parallel elements, a contradiction.  
	\end{proof}
	\begin{lem}\label{quotient of minor F}
		The graphic quotient of $M(F)$ is isomorphic to $M(Q_i)$, where $Q_i$ is as given in the Figure \ref{fig 3fold quotient}, for $i=1,2,3,4$.
	\end{lem}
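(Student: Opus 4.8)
The plan is to list all single-element extensions of $M(F)$, contract the adjoined element in each, and keep the graphic outcomes; recall that a quotient of $M(F)$ is by definition a matroid $Q/a$ with $Q\setminus a\cong M(F)$, as set up after Lemma~\ref{rel lemma in 3fold}. The starting point is that $M(F)$ has rank $2$: the graph $F$ has three vertices and is connected, so as a matroid $M(F)$ has exactly three points (parallel classes), of sizes $1$, $2$ and $2$, all lying on a single line.

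First I would handle the degenerate extensions. If the adjoined element $a$ is a loop or a coloop of $Q$, then $Q/a=Q\setminus a\cong M(F)$, which is graphic and accounts for $F$ itself among the $Q_i$. So it remains to treat $a$ neither a loop nor a coloop; since $Q\setminus a\cong M(F)$ has rank $2$, this forces $r(Q)=2$, and hence $a$ lies on the unique line spanned by $M(F)$. Therefore $a$ is either parallel to one of the three points of $M(F)$ or is a fourth point on that line, the latter being possible only when $Q$ is not required to be binary, since a binary line carries at most three points.

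Next I would contract $a$ in each sub-case. When $a$ is parallel to a point $P$, the elements of the parallel class of $P$ become loops of $Q/a$ and all remaining elements collapse into a single parallel class; this produces $U_{1,4}$ together with one loop if $P$ is the singleton class, and $U_{1,3}$ together with two loops if $P$ is one of the two doubled classes. When $a$ is a fourth point of the line, no new loops appear and $Q/a\cong U_{1,5}$. Each of the four matroids so obtained is graphic: $M(F)$ is graphic by construction, and the other three have rank at most $1$ and are realised by two vertices joined by, respectively, five parallel edges, four parallel edges together with a loop, and three parallel edges together with two loops. As a consistency check, the three outcomes coming from a binary $Q$ all meet the loop, cocircuit and parallel-element bounds of Lemma~\ref{3fold quotient properties}, whereas $U_{1,5}$ violates the four-parallel-element bound --- exactly because it requires a non-binary extension. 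This identifies the graphs $Q_1,\dots,Q_4$ of Figure~\ref{fig 3fold quotient}.

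The step that needs care is the completeness of the extension analysis: using $r(M(F))=2$ and the structure of the line it spans, one must verify that there is no single-element extension outside the list above and that the contractions yield exactly those four matroids, with no finer parallel structure surviving. I expect this short finite check, rather than the passage to graphs, to be the only genuine obstacle.
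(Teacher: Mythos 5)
Your overall strategy---classify the single-element extensions of the rank-$2$ matroid $M(F)$ by where the adjoined element $a$ sits on the line spanned by its three points, then contract---is essentially the paper's argument, which phrases the same thing as a rank count ($r(Q/a)=1$, $|E(Q/a)|=5$) followed by the constraints of Lemma \ref{3fold quotient properties}. But your final list is wrong: you retain $U_{1,5}$, the loopless rank-one matroid on five elements, as one of the four quotients, and Figure \ref{fig 3fold quotient} contains no such graph. The four graphs there are $F$ itself ($Q_1$), four parallel edges plus one loop ($Q_2$), and two graph realizations of three parallel edges plus two loops ($Q_3$ with both loops at one vertex, $Q_4$ with one loop at each vertex); the matroid $U_{1,5}$ is not among them, so your conclusion does not match the statement being proved.

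The source of the error is that you flag, but do not act on, your own observation that a fourth point on the line forces $Q$ to be non-binary. In this paper $Q$ is always binary: by Lemma \ref{rel lemma in 3fold} it is the element-splitting matroid $R_Y'$ of a binary gammoid $R$, or a contraction of it, and element splitting is defined over $GF(2)$. Concretely, if $g$ is the singleton parallel class of $M(F)$ and $e,f$ lie in the two doubled classes, then $Q/a\cong U_{1,5}$ would require $\{g,e,a\}$ and $\{g,f,a\}$ to be circuits of $Q$; their symmetric difference $\{e,f\}$ would then be a disjoint union of circuits of the binary matroid $Q$, hence a circuit of $Q\backslash a=M(F)$, which it is not. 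This is exactly what Lemma \ref{3fold quotient properties}(iii) encodes (at most four parallel elements in $Q/a$), so the case you call a ``consistency check'' is in fact the step that eliminates $U_{1,5}$. Once it is discarded, the remaining cases---$M(F)$, four parallel elements with one loop, three parallel elements with two loops---are precisely $M(Q_1),\dots,M(Q_4)$, and your argument coincides with the paper's.
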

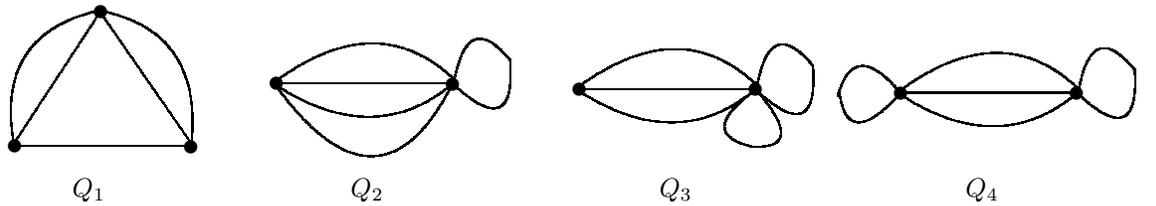
\begin{figure}[h]
% This is a LaTeX picture output by TeXCAD.
% File name: [quotient.pic].
% Version of TeXCAD: 4.51
% Reference / build: 27-Nov-2018 (rev. a75)
% For new versions, check: http://texcad.sf.net/
% Options on the following lines.
%\grade{\on}
%\emlines{\off}
%\epic{\off}
%\beziermacro{\on}
%\reduce{\on}
%\snapping{\off}
%\pvinsert{% Your \input, \def, etc. here}
%\quality{8.000}
%\graddiff{0.005}
%\snapasp{1}
%\zoom{4.0000}
\unitlength 1mm % = 2.845pt
\linethickness{0.4pt}
\ifx\plotpoint\undefined\newsavebox{\plotpoint}\fi % GNUPLOT compatibility
\begin{picture}(163,30.226)(0,0)
	\put(36.772,11.326){\circle*{1.8}}
	\put(24.772,29.326){\circle*{1.8}}
	\put(13.372,11.363){\circle*{1.8}}
	\put(13.19,11.384){\line(1,0){23.441}}
	%\emline(24.514,29.506)(37,11.489)
	\multiput(24.514,29.506)(.0336549865,-.0485628032){371}{\line(0,-1){.0485628032}}
	%\end
	%\emline(24.692,29.149)(13.097,11.489)
	\multiput(24.692,29.149)(-.0337046512,-.0513383721){344}{\line(0,-1){.0513383721}}
	%\end
	\qbezier(24.87,29.328)(38.517,26.562)(36.822,11.311)
	\qbezier(13.276,11.489)(10.6,26.117)(24.335,29.328)
	\put(23.25,5.5){\makebox(0,0)[cc]{$Q_1$}}
	\put(71.55,19.65){\circle*{1.8}}
	\put(48.15,19.687){\circle*{1.8}}
	\put(47.968,19.709){\line(1,0){23.441}}
	\qbezier(47.75,19.75)(61.625,30.125)(72,20)
	\qbezier(72,20)(61.5,10.75)(48,19.5)
	\qbezier(48,19.5)(61.75,.5)(71.5,19.5)
	\qbezier(71.75,19.75)(73.5,29.75)(79.25,22.75)
	\qbezier(79.25,22.75)(80,11.875)(71.75,19.5)
	\put(111.8,18.9){\circle*{1.8}}
	\put(88.4,18.937){\circle*{1.8}}
	\put(88.218,18.959){\line(1,0){23.441}}
	\qbezier(88,19)(101.875,29.375)(112.25,19.25)
	\qbezier(112.25,19.25)(101.75,10)(88.25,18.75)
	\qbezier(112,19)(113.75,29)(119.5,22)
	\qbezier(119.5,22)(120.25,11.125)(112,18.75)
	\put(154.55,18.4){\circle*{1.8}}
	\put(131.15,18.437){\circle*{1.8}}
	\put(130.968,18.459){\line(1,0){23.441}}
	\qbezier(130.75,18.5)(144.625,28.875)(155,18.75)
	\qbezier(155,18.75)(144.5,9.5)(131,18.25)
	\qbezier(154.75,18.5)(156.5,28.5)(162.25,21.5)
	\qbezier(162.25,21.5)(163,10.625)(154.75,18.25)
	\qbezier(122.75,18)(125,25.875)(131.25,18.25)
	\qbezier(123,18.5)(124,10.625)(131,18.25)
	\qbezier(111.75,18.75)(103.875,13)(111.5,11.25)
	\qbezier(111.5,11.25)(118.75,11)(112,18.75)
	\put(60.25,5.5){\makebox(0,0)[cc]{$Q_2$}}
	\put(101.25,5.5){\makebox(0,0)[cc]{$Q_3$}}
	\put(142,5.5){\makebox(0,0)[cc]{$Q_4$}}
\end{picture}

	\caption{Graphic quotients of $M(F)$.}
	\label{fig 3fold quotient}
\end{figure}
\begin{proof}
Let $Q$ be a matroid with $a \in E(Q)$ such that $Q \backslash a =M(F)$.\\
Case i) If $a$ is a loop or a coloop, then $Q/a= Q \backslash a=M(F)=M(Q_1)$.\\
Case ii) If $a$ is not a loop or a coloop, then $r(Q \backslash a)=2$ and $E(Q \backslash a)=5$, thus $r(Q)=2$ and $E(Q)=6$ and hence $r(Q/a)=1$ and $E(Q/a)=5$. Hence $Q/a=M(G)$ and the graph $G$ will have $2$ nodes and $5$ edges. From Lemma \ref{3fold quotient properties}, $Q/a$ does not contain more than two loops, more than four parallel elements, and a 2-cocircuit. Hence, there are the following cases.\\
i) $Q/a$ has four parallel elements and one loop. Hence $Q/a\cong M(Q_2)$ \\
ii) $Q/a$ has three parallel elements and two loops. Hence $Q/a \cong M(Q_3)$ or $Q/a \cong M(Q_4)$.\\
\end{proof}
\section{Splitting of Binary Gammoid}
In this section, we obtain the minors of the collection $\mathcal{GF}_{k}$, where $\mathcal{GF}_{k}$ is the collection of binary gammoids whose splitting using $k$ elements contains minor $M(F)$, for $k \geq 3$.
\begin{lem}
The collection $\mathcal{GF}_{1}$ is empty.
\end{lem}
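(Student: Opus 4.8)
The plan is to exploit the fact that a one-element splitting is essentially trivial: it only turns the splitting element into a coloop, and therefore cannot produce the connected matroid $M(F)$ unless that minor is already present in the original matroid.

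First I would pin down the shape of $M_T$ when $T=\{t\}$. Let $A$ be a matrix representation of the binary gammoid $M$; by Definition~\ref{def_splitting_in_3fold} the matrix $A_T$ is $A$ with one extra row appended whose only nonzero entry is a $1$ in the column of $t$. If $C$ is a circuit of $M_T=M(A_T)$, the columns indexed by $C$ sum to $0$ over $GF(2)$; reading off the coordinate contributed by the new row forces $t\notin C$, so $t$ is a coloop of $M_T$. Deleting the column of $t$ from $A_T$ leaves $A$ with an all-zero row, whence $M_T\backslash t=M\backslash t$. Consequently $M_T=(M\backslash t)\oplus L$, where $L$ is the coloop on $\{t\}$; in particular a one-element splitting keeps us inside the class of binary gammoids, which is already a hint that level $1$ contributes nothing new.

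Next I would record two elementary facts about $M(F)$, both visible from Figure~\ref{fig minor of pre gammoid}: the graph $F$ has no cut-vertex, so $M(F)$ is a connected matroid, and since it has five elements it has no coloop. Now suppose, for contradiction, that $M\in\mathcal{GF}_1$, so $M(F)$ is a minor of $M_T=(M\backslash t)\oplus L$. A minor of a direct sum is the direct sum of a minor of each summand, and being connected $M(F)$ must lie entirely within one of the two blocks; it cannot be a minor of the single-element matroid $L$, so $M(F)$ is a minor of $M\backslash t$, and hence of $M$. Thus $M$ itself already contains $M(F)$ as a minor, contradicting the fact that the members of $\mathcal{GF}_1$ are the binary gammoids that do not contain $M(F)$ as a minor but acquire one after the splitting. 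Hence $\mathcal{GF}_1=\emptyset$.

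I do not expect any genuine obstacle: the only point that needs care is the bookkeeping identity $M_T=(M\backslash t)\oplus L$ for a single splitting element, after which everything rests on the routine fact that a connected matroid cannot be divided between the two blocks of a direct sum. The lemma is the base case anchoring the recursion of Lemma~\ref{main lemma in 3fold}, so that the descent from $\mathcal{GF}_{k}$ to $\mathcal{GF}_{k-1}$ terminates at $k=2$.
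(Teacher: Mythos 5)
Your argument is correct in its core mechanics, and it supplies real content where the paper offers none: the paper's entire proof of this lemma is the sentence ``The proof is elementary,'' so there is no argument to compare against. Your route --- establish the identity $M_{\{t\}}=(M\backslash t)\oplus L$ with $L$ a coloop on $\{t\}$ by reading off the appended row over $GF(2)$, observe that $M(F)$ is a connected matroid on five elements, and invoke the fact that a connected minor of a direct sum must live in a single summand --- is exactly the kind of reasoning the authors presumably had in mind, and each step checks out. (One small slip: ``since it has five elements it has no coloop'' is not a valid inference on its own; what you actually need and already have is connectedness, which for a matroid with at least two elements rules out coloops.)

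The one point that deserves a flag is your final step. What your argument actually proves is: if $M_{\{t\}}$ has an $M(F)$-minor, then $M$ itself has an $M(F)$-minor. As the paper literally defines $\mathcal{GF}_k$ (``binary gammoids whose splitting using $k$ elements contains $M(F)$ as a minor''), that is not a contradiction --- for instance, $M(F)\oplus L$ is a binary gammoid whose one-element splitting on the coloop is itself and hence contains $M(F)$, so the literal collection $\mathcal{GF}_1$ is nonempty. You silently substitute the reading that members of $\mathcal{GF}_1$ must \emph{acquire} the minor through the splitting, i.e., must not already contain $M(F)$. That reading is the one consistent with the rest of the paper (case (iii) of Lemma \ref{main lemma in 3fold} is handled separately from case (iv), and in Theorem \ref{thm GF3 in 3fold} the matroids already containing $M(F)=M(F_1)$ are absorbed into the forbidden-minor list), so your proof is right in spirit; but you should state the convention explicitly rather than attribute it to the definition, since the lemma is false under the definition as written. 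This is a defect of the paper's bookkeeping more than of your argument.
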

\begin{proof}
The proof is elementary.
\end{proof}
\begin{lem}
The collection $\mathcal{GF}_{2}$ is empty.
\end{lem}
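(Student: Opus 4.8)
The plan is to peel the problem down to a finite verification using Lemma~\ref{main lemma in 3fold}. Assume $\mathcal{GF}_2\neq\emptyset$ and let $S$ be a minor-minimal member, so $S$ is a binary gammoid and $M(F)$ is a minor of $S_Y$ for some $Y\subseteq E(S)$ with $|Y|=2$. Applying Lemma~\ref{main lemma in 3fold} with $k=2$ produces a minor $R$ of $S$ of one of the four listed types. Type~(iv) asks $R$ to be a one-element extension of a minimal minor in $\mathcal{GF}_1$, which is impossible since $\mathcal{GF}_1=\emptyset$. In each of types (i), (ii), (iii) the matroid $R$ is itself a member of $\mathcal{GF}_2$ -- in (i) because $R_Y\cong M(F)$, in (ii) because $M(F)=R_Y/Y'$ is a minor of $R_Y$, and in (iii) because the proof of Lemma~\ref{main lemma in 3fold} there yields $R\setminus Y=M(F)=R_Y\setminus Y$. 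By minimality of $S$ we therefore have $R=S$, so $S$ itself realises (i), (ii), or (iii). It thus suffices to show that no binary gammoid of these three shapes lies in $\mathcal{GF}_2$.

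If $S$ realises (i) or (ii), Lemma~\ref{rel lemma in 3fold} gives a binary gammoid $Q$ and $a\in E(Q)$ with $Q\setminus a=M(F)$ such that $S=Q/a$ or $S$ is a coextension of $Q/a$ by at most two elements, and by Lemma~\ref{quotient of minor F} the quotient $Q/a$ is one of $M(Q_1),\dots,M(Q_4)$. Hence $S$ lies in an explicit finite list: the four matroids $M(Q_i)$ and those of their coextensions by one or two elements which are binary gammoids (the non-gammoid coextensions being discarded outright, and the remaining ones further restricted by the loop/parallel/cocircuit constraints of Lemma~\ref{3fold quotient properties}). For each candidate on the list I would form $S_{Y'}$ for every $2$-subset $Y'\subseteq E(S)$ and check that $M(F)$ is never a minor. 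This check is finite and is pinned down by the data of $M(F)$ -- rank $2$, five elements, parallel classes of sizes $2,2,1$, cocircuits only of sizes $3$ and $4$ -- together with rank/size bookkeeping: a rank-$2$ matroid on five elements has $M(F)$ as a minor only if it is $M(F)$, which forces $Y'$ to contain a $1$- or $2$-cocircuit of the pre-image (impossible), while a $2$-element splitting that lifts the rank to $3$ cannot have a rank-$2$, five-element minor. This rules out cases (i) and (ii).

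If $S$ realises (iii), then $S\setminus Y=M(F)$ with $|Y|=2$, so $S$ is $M(F)$ with two adjoined elements. Here I would argue directly over the finitely many two-element extensions of $M(F)$ that are binary gammoids -- again controlled by Lemma~\ref{3fold quotient properties} and by a coextension analysis as in Lemma~\ref{rel lemma in 3fold} -- that none is minor-minimal in $\mathcal{GF}_2$: $M(F)$ itself is not in $\mathcal{GF}_2$ (it has no $1$- or $2$-cocircuit, and a rank/size count forbids a proper $M(F)$ minor in any of its $2$-element splittings), and in each extension one can remove one of the adjoined elements and remain in $\mathcal{GF}_2$, contradicting minimality. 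Assembling the three cases gives $\mathcal{GF}_2=\emptyset$. I expect the main obstacle to be case (iii) -- specifically, verifying that deleting an element not in $Y$ from $S$ and from $S_Y$ really preserves the $M(F)$ minor -- together with ensuring the coextension enumeration in cases (i) and (ii) is exhaustive; both become routine once one records how the splitting operation commutes with deleting elements outside the splitting set.
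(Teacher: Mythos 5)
Your overall strategy (minimal counterexample, Lemma \ref{main lemma in 3fold} with $k=2$, discarding type (iv) via $\mathcal{GF}_1=\emptyset$, reducing types (i)--(ii) to the quotients of Lemma \ref{quotient of minor F} and their small coextensions, treating type (iii) as a two-element extension of $M(F)$) is the same reduction the paper relies on; the paper's own proof is just the two-sentence assertion that no two-element splitting of a quotient, or of a coextension of a quotient, yields $M(F)$. The genuine gap in your write-up is that the decisive finite verification is announced but not carried out, and the rank/size bookkeeping you offer in its place only covers the five-element quotients $M(Q_i)$ themselves: for those, a splitting either keeps five elements and rank $2$ (so it would have to be isomorphic to $M(F)$ outright) or jumps to rank $3$ while still having only five elements (so it has no five-element rank-$2$ minor). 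That dichotomy says nothing about the six- and seven-element coextensions, where one is allowed to contract an element of the splitting before comparing with $M(F)$.

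In fact the verification fails exactly there. Write $M(F)$ with parallel pairs $\{a,b\}$, $\{c,d\}$ and single element $e=a+c$, represented by $A=\bigl(\begin{smallmatrix}1&1&0&0&1\\0&0&1&1&1\end{smallmatrix}\bigr)$. Let $S$ be $M(F)$ with one added loop $r$: this is a one-element coextension of the quotient $M(Q_1)=M(F)$ (contracting the loop returns $M(F)$), it is a binary gammoid, and it passes every test of Lemma \ref{3fold quotient properties}. Splitting on $Y=\{e,r\}$ appends the row $(0,0,0,0,1,1)$, so the column of $r$ becomes $(0,0,1)^{T}$, and contracting $r$ deletes precisely the appended row, giving $S_Y/r\cong M(F)$. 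Hence $S\in\mathcal{GF}_2$, so the collection is not empty; moreover no proper minor of $S$ lies in $\mathcal{GF}_2$ (a proper minor has at most five elements, so its two-element splitting could only contain $M(F)$ by being isomorphic to it, which a direct check rules out), so your minimality descent in case (iii) terminates at this nonempty minimal member instead of at a contradiction. What the downstream arguments actually use, and what is salvageable, is the weaker assertion that every member of $\mathcal{GF}_2$ already contains $M(F)=M(F_1)$ as a minor --- which $S$ does --- but neither your argument nor the paper's establishes the literal emptiness claim.
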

\begin{proof}
From Lemma \ref{quotient of minor F}, it is clear that the splitting of any quotient using any two elements is not isomorphic $M(F)$. Also, any coextension of quotients does not give $M(F)$ after splitting using any two elements. Hence, the collection $\mathcal{GF}_{2}$ is empty.
\end{proof}
\begin{theorem}\label{thm GF3 in 3fold}
Let $S$ be a binary gammoid, then $S \in \mathcal{GF}_{k}$, for $k\geq 3$ if and only if $S$ has a minor $M(F_i)$, where graph $F_i$ is as shown in Figure \ref{fig GF3 minor}, for $i=1,2,3,4$.
\end{theorem}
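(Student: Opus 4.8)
The plan is to prove both implications, deriving the forward direction by induction on $k\ge 3$ and feeding it, in the converse direction, directly from the definition of $\mathcal{GF}_{k}$. The tools are the four structural results already in hand: Lemma~\ref{main lemma in 3fold}, Lemma~\ref{rel lemma in 3fold}, Lemma~\ref{3fold quotient properties}, Lemma~\ref{quotient of minor F}, together with the standing fact that binary gammoids are closed under minors. For the necessity, assume $S\in\mathcal{GF}_{k}$, so that $S_{Y}$ has $M(F)$ as a minor for some $Y\subseteq E(S)$ with $|Y|=k$. By Lemma~\ref{main lemma in 3fold} there is a minor $R$ of $S$ of one of the four types (i)--(iv); the base of the induction is $k=3$, and the claim to be proved is that in every case $R$, and hence $S$, has some $M(F_{i})$ as a minor.

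Two of the cases are immediate. If $R$ is as in (iv), it is a one-element extension of a minimal minor of $\mathcal{GF}_{k-1}$; this is impossible when $k=3$ because $\mathcal{GF}_{2}=\emptyset$, while for $k\ge 4$ the induction hypothesis says the minimal minors of $\mathcal{GF}_{k-1}$ are exactly the $M(F_{i})$, and deleting the extension element exhibits one of them in $R$. If $R$ is as in (iii), then $S$ has $M(F)$ itself as a minor, and a direct inspection of $M(F)$ (Figure~\ref{fig minor of pre gammoid}) shows it has some $M(F_{i})$ as a minor. The remaining cases (i) and (ii) are reduced to quotients by Lemma~\ref{rel lemma in 3fold}: there is a binary gammoid $Q$ and $a\in E(Q)$ with $Q\backslash a=M(F)$ such that $R=Q/a$ or $R$ is a coextension of $Q/a$ by at most $k$ elements. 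Every quotient $Q/a$ of $M(F)$ is graphic: when $a$ is a loop or coloop one has $Q/a=Q\backslash a=M(F)$, and otherwise $r(Q/a)=r(M(F))-1=1$, so $Q/a$ is a rank-one matroid and is graphic; hence by Lemma~\ref{quotient of minor F}, $Q/a\cong M(Q_{i})$ for some $i$, with $Q_{i}$ as in Figure~\ref{fig 3fold quotient}. It then remains to analyse the binary-gammoid coextensions of $M(Q_{1}),\dots,M(Q_{4})$: using Lemma~\ref{3fold quotient properties} (no $2$-cocircuit, at most two loops, at most four parallel elements) and the minor-closure of binary gammoids, one shows that any such coextension by more than one element already has a proper minor that is again a coextension of some $M(Q_{i})$ belonging to $\mathcal{GF}_{k}$, so that after iterated deletion one is left with a coextension of some $M(Q_{i})$ by at most one element; enumerating those short coextensions that remain binary gammoids yields precisely the graphs $F_{1},F_{2},F_{3},F_{4}$ of Figure~\ref{fig GF3 minor}. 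This gives the forward direction and simultaneously shows the $M(F_{i})$ are minor-minimal in $\mathcal{GF}_{k}$.

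For the sufficiency, suppose $S$ has some $M(F_{i})$ as a minor, say $M(F_{i})=S\backslash D/C$. For each $i$ one singles out a subset $Y_{i}\subseteq E(M(F_{i}))$ --- essentially a cocircuit, or a cocircuit enlarged by one added parallel element --- and checks by a short computation that $M(F_{i})_{Y_{i}}$ has $M(F)$ as a minor. Using the identities $(S\backslash z)_{Y\backslash z}=(S_{Y})\backslash z$ (and the analogous contraction identity) from the proof of Lemma~\ref{main lemma in 3fold}, one enlarges $Y_{i}$ to a $k$-element set $Y$ by adjoining $k-|Y_{i}|$ elements of $D$; then $S_{Y}$ has $M(F_{i})_{Y_{i}}$, hence $M(F)$, as a minor, so $S\in\mathcal{GF}_{k}$.

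The main obstacle is the coextension analysis in cases (i) and (ii). Although Lemma~\ref{rel lemma in 3fold} a priori permits coextensions by as many as $k$ elements (and $k$ is unbounded), the assertion is that every binary-gammoid coextension of an $M(Q_{i})$ already contains one of the four fixed graphic minors $M(F_{1}),\dots,M(F_{4})$, so the excluded-minor list is finite and does not grow with $k$. Establishing this means combining the arithmetic restrictions of Lemma~\ref{3fold quotient properties} with enough structure theory of binary gammoids (closure under minors and the unavailability of certain small configurations) to constrain the possible coextensions, followed by a careful finite verification that nothing outside the four listed graphs can be minor-minimal; the bookkeeping here, rather than any single conceptual step, is the crux of the argument.
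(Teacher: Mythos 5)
Your skeleton matches the paper's: apply Lemma \ref{main lemma in 3fold} to get the four cases, discard (iii) and (iv), reduce (i)--(ii) to quotients via Lemma \ref{rel lemma in 3fold}, and identify the graphic quotients via Lemma \ref{quotient of minor F}. Your handling of case (iv) by induction on $k$ is in fact cleaner than the paper's, which dismisses (iv) only by citing $\mathcal{GF}_2=\emptyset$ and therefore really covers only $k=3$ even though the theorem is stated for all $k\geq 3$; your induction hypothesis applied to $Z=R\backslash y\in\mathcal{GF}_{k-1}$ is the right repair.

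However, the step you single out as ``the crux'' --- the coextension analysis in cases (i) and (ii) --- is based on a misreading, and the argument you sketch for it is wrong. You claim that ``enumerating those short coextensions that remain binary gammoids yields precisely the graphs $F_1,\dots,F_4$,'' i.e.\ that the $F_i$ arise as one-element coextensions of the quotients $Q_i$. They do not: comparing Figures \ref{fig 3fold quotient} and \ref{fig GF3 minor}, the graphs $Q_i$ and $F_i$ are \emph{identical} (the paper states ``Here $Q_i=F_i$''), so a one-element coextension of $M(Q_i)$ is strictly larger than $M(F_i)$ and is not what the excluded-minor list consists of. Moreover no enumeration or iterated-deletion scheme is needed at all: Lemma \ref{rel lemma in 3fold} produces $R$ with $R/Y'=Q/a\cong M(Q_i)$, so $M(Q_i)=M(F_i)$ is automatically a minor of $R$ (hence of $S$) by contracting $Y'$, regardless of how many elements the coextension uses. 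The ``main obstacle'' you describe, including the appeal to Lemma \ref{3fold quotient properties} and a ``careful finite verification,'' is therefore both unnecessary and, as described, incorrect; since you leave that verification unexecuted and its stated conclusion conflicts with the figures, this is a genuine gap in your write-up even though the theorem itself is not in danger. (A smaller point: in the sufficiency direction, enlarging $Y_i$ by elements of $D$ to reach size $k$ does work via $(S_{Y\cup\{z\}})\backslash z=(S\backslash z)_Y$, but it silently assumes $E(S)$ is large enough; the paper is equally silent, and only $k=3,4$ are ever used.)
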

\begin{figure}[h]
	\centering
% This is a LaTeX picture output by TeXCAD.
% File name: [Minor GF3.pic].
% Version of TeXCAD: 4.51
% Reference / build: 27-Nov-2018 (rev. a75)
% For new versions, check: http://texcad.sf.net/
% Options on the following lines.
%\grade{\on}
%\emlines{\off}
%\epic{\off}
%\beziermacro{\on}
%\reduce{\on}
%\snapping{\off}
%\pvinsert{% Your \input, \def, etc. here}
%\quality{8.000}
%\graddiff{0.005}
%\snapasp{1}
%\zoom{4.0000}
\unitlength 1mm % = 2.845pt
\linethickness{0.4pt}
\ifx\plotpoint\undefined\newsavebox{\plotpoint}\fi % GNUPLOT compatibility
\begin{picture}(163,30.226)(0,0)
	\put(36.772,11.326){\circle*{1.8}}
	\put(24.772,29.326){\circle*{1.8}}
	\put(13.372,11.363){\circle*{1.8}}
	\put(13.19,11.384){\line(1,0){23.441}}
	%\emline(24.514,29.506)(37,11.489)
	\multiput(24.514,29.506)(.0336549865,-.0485633423){371}{\line(0,-1){.0485633423}}
	%\end
	%\emline(24.692,29.149)(13.097,11.489)
	\multiput(24.692,29.149)(-.0337063953,-.0513372093){344}{\line(0,-1){.0513372093}}
	%\end
	\qbezier(24.87,29.328)(38.517,26.562)(36.822,11.311)
	\qbezier(13.276,11.489)(10.6,26.117)(24.335,29.328)
	\put(23.25,5.5){\makebox(0,0)[cc]{$F_1$}}
	\put(71.55,19.65){\circle*{1.8}}
	\put(48.15,19.687){\circle*{1.8}}
	\put(47.968,19.709){\line(1,0){23.441}}
	\qbezier(47.75,19.75)(61.625,30.125)(72,20)
	\qbezier(72,20)(61.5,10.75)(48,19.5)
	\qbezier(48,19.5)(61.75,.5)(71.5,19.5)
	\qbezier(71.75,19.75)(73.5,29.75)(79.25,22.75)
	\qbezier(79.25,22.75)(80,11.875)(71.75,19.5)
	\put(111.8,18.9){\circle*{1.8}}
	\put(88.4,18.937){\circle*{1.8}}
	\put(88.218,18.959){\line(1,0){23.441}}
	\qbezier(88,19)(101.875,29.375)(112.25,19.25)
	\qbezier(112.25,19.25)(101.75,10)(88.25,18.75)
	\qbezier(112,19)(113.75,29)(119.5,22)
	\qbezier(119.5,22)(120.25,11.125)(112,18.75)
	\put(154.55,18.4){\circle*{1.8}}
	\put(131.15,18.437){\circle*{1.8}}
	\put(130.968,18.459){\line(1,0){23.441}}
	\qbezier(130.75,18.5)(144.625,28.875)(155,18.75)
	\qbezier(155,18.75)(144.5,9.5)(131,18.25)
	\qbezier(154.75,18.5)(156.5,28.5)(162.25,21.5)
	\qbezier(162.25,21.5)(163,10.625)(154.75,18.25)
	\qbezier(122.75,18)(125,25.875)(131.25,18.25)
	\qbezier(123,18.5)(124,10.625)(131,18.25)
	\qbezier(111.75,18.75)(103.875,13)(111.5,11.25)
	\qbezier(111.5,11.25)(118.75,11)(112,18.75)
	\put(60.25,5.5){\makebox(0,0)[cc]{$F_2$}}
	\put(101.25,5.5){\makebox(0,0)[cc]{$F_3$}}
	\put(142,5.5){\makebox(0,0)[cc]{$F_4$}}
	\put(25.75,13.75){\makebox(0,0)[cc]{$x$}}
	\put(32.25,22.5){\makebox(0,0)[cc]{$y$}}
	\put(35,27.5){\makebox(0,0)[cc]{$z$}}
	\put(60.75,28.75){\makebox(0,0)[cc]{$x$}}
	\put(60.25,22){\makebox(0,0)[cc]{$y$}}
	\put(80,25.25){\makebox(0,0)[cc]{$z$}}
	\put(102.5,28){\makebox(0,0)[cc]{$x$}}
	\put(119.5,25.75){\makebox(0,0)[cc]{$y$}}
	\put(113.25,9){\makebox(0,0)[cc]{$z$}}
	\put(128.5,23.5){\makebox(0,0)[cc]{$x$}}
	\put(143.25,27){\makebox(0,0)[cc]{$y$}}
	\put(162.75,24.25){\makebox(0,0)[cc]{$z$}}
\end{picture}

	\caption{Minors of the collection $\mathcal{GF}_k$.}
	\label{fig GF3 minor}
\end{figure}
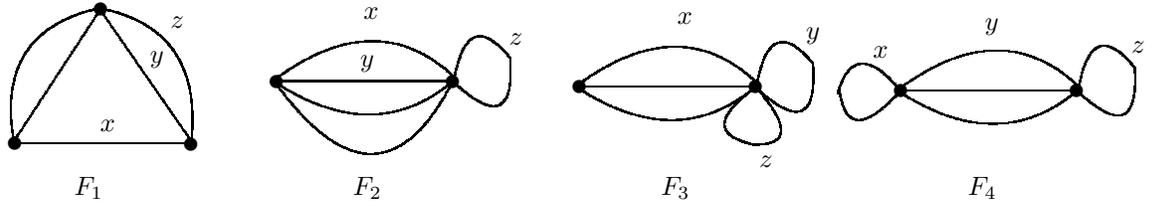
\begin{proof}
Suppose, $S$ has $M(F_i)$ as a minor, then it is very straightforward to prove that $M(F_i)_{\{x,y,z\}}\cong M(F)$, where $\{x,y,z\}$ and $F_i$ is as shown in the Figure \ref{fig GF3 minor}, for $i=1,2,3,4$. Hence $S \in \mathcal{GF}_k$, for $k=3$.\\
Suppose, $M(F_i)$ is not a minor of a binary matroid $S$ then we need to prove that $S_Y$ does not contain $M(F)$ for $|Y| \geq 3$, where $F$ is as shown in Figure  \ref{fig minor of pre gammoid}. On the contrary, assume that $S_Y$ contains $M(F)$ as a minor, then by Lemma \ref{main lemma in 3fold}, $S$ has a minor $R$ such that one of the below holds.\\
i) $R_Y \cong M(F)$.
ii) $R_Y/Y' \cong M(F)$.
iii) $R$ has a minor $M(F)$. 
iv) $R$ is one element extension of some minimal minor in the collection $\mathcal{GF}_{k-1}$.\\
As $\mathcal{GF}_2=\phi$, we discard the case-(iv). Also, if $R$ has minor $M(F)$ and $M(F)=M(F_1)$ a contradiction. Hence we discard the case-(iii). 
Suppose $R$ is satisfying case-(i) and case-(ii), then by Lemma \ref{rel lemma in 3fold}, there exists a matroid $Q$ with $a \in E(Q)$, such that $Q\backslash a = M(F)$ then either $R=Q/a$ or coextension of $Q/a$ by not more than $k$ elements. Then, by Lemma \ref{quotient of minor F}, $Q/a\cong M(Q_i)$ where $Q_i$ is as shown in the Figure \ref{fig 3fold quotient}, for $i=1,2,3,4$. Thus $R=M(Q_i)$ or $R$ is a coextension of $M(Q_i)$. \\
Here $Q_i=F_i$, a contradiction. Hence we discard $Q_i$, for $i=1,2,3,4$. \\
Thus, $S_Y$ does not contain a minor $M(F)$ and hence $S \notin \mathcal{GF}_k$, for any $k \geq 3$. Hence the proof. 
\end{proof}

\section{Main Theorem}
We now obtain the excluded minors for a binary gammoid whose {\it 3-fold} is a binary gammoid. In this section we prove the main theorem hence we state the theorem again.
\begin{thm}
	Let $M$ be a binary gammoid then {\it 3-fold} of $M$ is binary gammoid if and only if $M$ does not contain $M(G_4)$ as a minor, where $G_4$ is shown in the Figure \ref{graph 3fold proof}.
	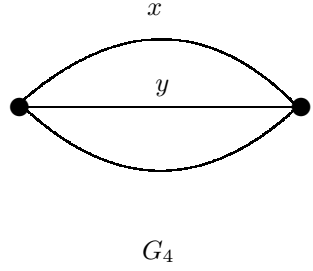
\begin{figure}[h!]
		\centering
		% This is a LaTeX picture output by TeXCAD.
		% File name: [minor of 3-fold binary gammoid.pic].
		% Version of TeXCAD: 4.51
		% Reference / build: 27-Nov-2018 (rev. a75)
		% For new versions, check: http://texcad.sf.net/
		% Options on the following lines.
		%\grade{\on}
		%\emlines{\off}
		%\epic{\off}
		%\beziermacro{\on}
		%\reduce{\on}
		%\snapping{\off}
		%\pvinsert{% Your \input, \def, etc. here}
		%\quality{8.000}
		%\graddiff{0.005}
		%\snapasp{1}
		%\zoom{4.0000}
		\unitlength 1mm % = 2.845pt
		\linethickness{0.4pt}
		\ifx\plotpoint\undefined\newsavebox{\plotpoint}\fi % GNUPLOT compatibility
	\begin{picture}(56.75,41.625)(0,0)
			\put(18,23.75){\circle*{2.5}}
			\put(55.5,23.75){\circle*{2.5}}
			\put(17.75,23.75){\line(1,0){38.5}}
			\qbezier(17.75,24)(37.625,41.625)(55,23.75)
			\qbezier(55,23.75)(36.75,6.75)(18.5,23.75)
			\put(36,36.75){\makebox(0,0)[cc]{$x$}}
			\put(37,26.25){\makebox(0,0)[cc]{$y$}}
			\put(36.5,4.5){\makebox(0,0)[cc]{$G_4$}}
		\end{picture}
		
		\caption{Excluded minor for a binary gammoid whose {\it 3-fold} is gammoid.}
		\label{graph 3fold proof}
	\end{figure}
\end{thm}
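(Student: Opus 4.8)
The strategy is to push $M$ through the reductions set up in Sections 2--4 and then finish with a short minor computation. Write $S=M\cup\{p,r\}$ for the binary gammoid obtained from $M$ by adjoining two loops, and put $Y=\{x,y,p,r\}$. By the discussion of Section 2, the \emph{3-fold} $M''$ is a binary gammoid if and only if $S_Y$ has no minor isomorphic to $M(F)$, where $F$ is the graph of Figure \ref{fig minor of pre gammoid}. Since $|Y|=4\ge 3$, Theorem \ref{thm GF3 in 3fold} applies and says that $S_Y$ has an $M(F)$-minor precisely when $S$ has a minor isomorphic to $M(F_i)$ for some $i\in\{1,2,3,4\}$, the graphs $F_i$ being those of Figure \ref{fig GF3 minor}. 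Thus it remains to prove that $S$ has a minor isomorphic to some $M(F_i)$ if and only if $M$ has a minor isomorphic to $M(G_4)$.

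The next step identifies the matroids involved. The graph $G_4$ of Figure \ref{graph 3fold proof} is a pair of vertices joined by three parallel edges, so $M(G_4)\cong U_{1,3}$. Recall from Section 4 that $F_1,\dots,F_4$ are, up to isomorphism, the graphic quotients of $M(F)$: here $M(F_1)=M(F)$ is a rank-$2$ matroid on five elements whose unique element lying in no non-trivial parallel class may be contracted to produce $U_{1,4}$; $M(F_2)\cong U_{1,4}$ with a loop adjoined; and $M(F_3)\cong M(F_4)\cong U_{1,3}$ with two loops adjoined. Consequently each $M(F_i)$, and indeed every matroid obtained from it by deleting some of its loops, has a $U_{1,3}$-minor.

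For the implication that a forbidden minor of $M$ is fatal, assume $M$ has an $M(G_4)$-minor, that is, a $U_{1,3}$-minor. Adjoining the two loops $p,r$ to this minor exhibits in $S=M\cup\{p,r\}$ a minor isomorphic to $U_{1,3}$ with two loops, namely $M(F_3)$; hence $S$ has an $M(F_i)$-minor and $M''$ fails to be a binary gammoid. For the reverse implication, suppose $S$ has a minor $L\cong M(F_i)$ for some $i$. A loop of $S$ that survives into $L$ remains a loop of $L$, and the only loops of $S$ lying outside $M$ are $p$ and $r$; deleting from $L$ whichever of $p,r$ occur in it (each necessarily a loop of $L$) yields a minor of $M$ isomorphic to $M(F_i)$ with at most two loops removed. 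By the previous paragraph this matroid still has a $U_{1,3}$-minor, so $M$ has an $M(G_4)$-minor. Together with the reductions above, this proves the theorem.

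The one place needing attention is the bookkeeping in the reverse implication: one must observe that at most two loops of an $M(F_i)$-minor of $S$ can be $p$ or $r$, and, in the loopless case $i=1$, that the needed $U_{1,3}$-minor of $M(F_1)=M(F)$ arises from a contraction -- the parallel classes of $M(F)$ having size only two -- rather than from among its parallel elements. The substantive work, namely the reduction of Section 2 and Theorem \ref{thm GF3 in 3fold}, is already in hand, so beyond these observations the argument is routine.
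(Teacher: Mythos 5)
Your reverse implication (no $M(G_4)$-minor implies $M''$ is a gammoid) follows the same route as the paper -- Theorem \ref{thm gam to gam wrt 3 elt in3fold}, deletion of $q$ to pass to $M(F)$, then Theorem \ref{thm GF3 in 3fold} -- and your endgame is actually more careful than the paper's: the paper simply asserts that the resulting $M(F_i)$-minor of $M\cup\{p,r\}$ is $M(F_3)$ or $M(F_4)$ with $p,r$ as its two loops, whereas you treat the cases $i=1,2$ and the possibility that the surviving loops are not $p$ or $r$, noting that each $M(F_i)$ with any subset of its loops deleted still has a $U_{1,3}\cong M(G_4)$-minor. That half is sound.

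The forward implication is where you diverge and where there is a genuine gap. The paper proves it by an explicit matrix computation: it represents $M(G_4)$, carries out the two splittings of Definition \ref{defn 3 fold in splitting}, and identifies the result with $M(K_4)$. You instead invert the reduction chain, resting on the claim that ``by the discussion of Section 2, $M''$ is a binary gammoid if and only if $S_Y$ has no $M(F)$-minor.'' Section 2 establishes only one half of this: from ``$M''=P_X$ is not a gammoid'' it extracts an $M(G_i)$-minor of $P$ and deletes $q$. The half your argument needs is the converse -- that an $M(F_i)$-minor of $S$ forces $M''$ to fail to be a gammoid -- and that does not follow from anything quoted. Two things are missing. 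First, Theorem \ref{thm GF3 in 3fold} only guarantees that \emph{some} $k$-element splitting of $S$ has an $M(F)$-minor (its proof splits the $M(F_i)$-minor on the labelled set $\{x,y,z\}$ of Figure \ref{fig GF3 minor}); you need this for the specific set $Y=\{x,y,p,r\}$ dictated by the 3-fold construction, which requires checking that your $M(F_3)$-minor carries $x,y$ among its parallel elements and $p,r$ as its loops. Second, even granting an $M(F)$-minor of $S_Y$, concluding that $M''=P_X$ is not a gammoid requires reinserting the loop $q$ to obtain an $M(G_i)$-minor of $P$ with the distinguished elements $X=\{x,q,r\}$ in the correct positions, and then invoking the fact that splitting a labelled minor yields a minor of the splitting, so that $M''$ acquires an $M(G_i)_X\cong M(K_4)$-minor. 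The paper's direct computation is precisely its substitute for this chain; if you wish to avoid the computation you must prove, not cite, the converse half of your ``if and only if.''
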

\begin{proof}
	Let $M$ be a binary gammoid containing a minor $M(G_4)$. Then, let $A$ be a matrix representing $M(G_4)$. \\
	
	$ A =\left[ \begin{array}{ccc}
		x & y &    \\
		1 & 1 & 1    
	\end{array} \right].$ 
	Then, $B$ is a matrix obtained by adding three loops $\{p,q,r\}$.\\
	$ B =\left[ \begin{array}{cccccc}
		x & y &  &p &q &r \\
		1 & 1 & 1 &0 &0 &0  
	\end{array} \right].$ Then $C=B_{x,y,p,r}$. Thus, $C$ is as follows.\\
	$ C =\left[ \begin{array}{cccccc}
		x & y &  &p &q &r\\
		1 & 1 & 1 &0 &0  &0\\
		1 & 1 &0&1 &0  &1
	\end{array} \right].$ \\
	Let $D=C_{\{x,q,r\}}$. The matrix $D$ is as follows. \\
	$ D =\left[ \begin{array}{cccccc}
		x & y &  &p &q &r\\
		1 & 1 & 1 &0 &0  &0\\
		1 & 1 &0&1 &0  &1\\
		1 & 0 &0&0 &1  &1
	\end{array} \right].$ \\
	Thus, by Definition \ref{defn 3 fold in splitting}, $M(G_4)''$ using $T=\{x,y\}$ is $M(D)$ and $M(D)\cong M(K_4)$, hence $M(G_4)''$ not a binary gammoid.\\
	Conversely, suppose that $M$ does not contain minor $M(G_4)$, then we need to prove that $M''$ is a binary gammoid. On the contrary, assume that $M''$ is not a binary gammoid. By definition \ref{defn 3 fold in splitting}, $M''=[[M\cup \{p,q,r\}]_{\{x,y,p,r\}}]_{\{x,q,r\}}$ is not gammoid, where $\{p,q,r\}$ are loops. Then, by Theorem \ref{thm gam to gam wrt 3 elt in3fold}, 
	$[M\cup \{p,q,r\}]_{\{x,y,p,r\}}$ has $M(G_i)$ as a minor, where $G_i$ is as shown in the Figure \ref{fig gam to gam wrt 3 element in3fold} for $i=1,2,3$. $[M\cup \{p,q,r\}]_{\{x,y,p,r\}} \backslash H_1 /H_2=M(G_i)$. As $\{x,q,r\}\subseteq E(M(G_i))$, $q \notin H_1 \cup H_2$ and $q \notin Y$. Deleting $\{q\}$ from both side we get $[[M\cup \{p,q,r\}]_{\{x,y,p,r\}}] \backslash H_1 / H_2]\backslash q=M(G_i)\backslash q$. Thus $[M\cup \{p,r\}]_{\{x,y,p,r\}}] \backslash H_1 / H_2=M(F)$. Thus  $[M\cup \{p,r\}]_{\{x,y,p,r\}}$ has a minor $M(F)$. Hence, by Theorem \ref{thm GF3 in 3fold}, $M \cup \{p,r\}$ has a minor $M(F_i)$,where $F_i$ is as shown in the Figure \ref{fig GF3 minor}, for $i=1,2,3,4$. As $\{p,r\}$ are loops then $M\cup \{p,r\} /T_1 \backslash T_2 =M(F_i)$, for $i=3,4$. Deleting $\{p,r\}$ from both sides we get $[M\cup \{p,r\} /T_1 \backslash T_2]\backslash \{p,r\} =M(F_i) \backslash \{p,r\}$, $M/T_1\backslash T_2=M(G_4)$. Thus, $M$ has a minor $M(G_4)$, a contradiction. Hence $M''$ is a binary gammoid. 
\end{proof}
\bibliographystyle{amsplain}
%\bibliography{xbib}

\end{document}